\numberwithin{equation}{section} 
\newtheorem{theorem}{Theorem}[section]
\newtheorem{proposition}[theorem]{Proposition}
\newtheorem{lemma}[theorem]{Lemma}
\newtheorem{corollary}[theorem]{Corollary}
\theoremstyle{definition}
\newtheorem{remark}[theorem]{Remark}
\newtheorem{question}[theorem]{Question}
\newcommand{\EnS}{Y}
\newcommand{\KtS}{X}
\newcommand{\tdfn}{f_N}
\newcommand{\tdfm}{f_M}
\def\Aut{\mathop{\mathrm{Aut}}\nolimits}
\def\C{\mathbb C}  \def\R{\mathbb R} \def\Z{\mathbb Z} \def\N{\mathbb N}
\newcommand{\OG}{\mathord{\mathrm{O}}}
\newcommand{\intf}[1]{\langle #1 \rangle}
\DeclareMathOperator{\disc}{det}
\DeclareMathOperator{\rank}{rank}
\DeclareMathOperator{\res}{res}
\DeclareMathOperator{\tr}{Tr}
\newcommand{\divides}{\mid}
\newenvironment{(enumerate)}{
  \begin{enumerate}
  
  }{\end{enumerate}}
\newenvironment{anumerate}{
  \begin{enumerate}
  
  }{\end{enumerate}}
\newcommand{\even}{\mathrm{II}}
\DeclareMathOperator{\NS}{NS}
\DeclareMathOperator{\GL}{GL}
\DeclareMathOperator{\End}{End}
\newcommand{\QQ}{\mathbb{Q}}
\newcommand{\FF}{\mathbb{F}}
\newcommand{\NN}{\mathbb{N}}
\newcommand{\RR}{\mathbb{R}}
\newcommand{\ZZ}{\mathbb{Z}}
\newcommand{\NumS}{\operatorname{Num}(Y)}
\begin{document}
\title[On characteristic polynomials of automorphisms]{On characteristic polynomials of automorphisms of Enriques surfaces}
\author[S. Brandhorst]{Simon Brandhorst}
\address{Fachbereich Mathematik, Saarland University,
Campus E2.4 Zi. 222, 66123 Saarbr\"ucken, Germany}
\email{brandhorst@math.uni-sb.de}

\author[S. Rams]{S{\L}awomir Rams}
\address{Institute of Mathematics, Jagiellonian University,
ul. {\L}ojasiewicza 6,  30-348 Krak\'ow, Poland}
\email{slawomir.rams@uj.edu.pl}

\author[I. Shimada]{Ichiro Shimada}
\address{Department of Mathematics,
Graduate School of Science,
Hiroshima University,
1-3-1 Kagamiyama,
Higashi-Hiroshima,
739-8526 JAPAN}
\email{ichiro-shimada@hiroshima-u.ac.jp}

\thanks{S.~B. is supported by  SFB-TRR 195 ”Symbolic Tools in Mathematics and their Application” of the German Research Foundation(DFG).
S.~R. is partially supported by the Polish National Science Centre (NCN) OPUS grant  2017/25/B/ST1/00853.
I.~S.  is supported by JSPS KAKENHI Grant Number 15H05738, ~16H03926,  and~16K13749}

\subjclass[2010]{Primary: 14J28; 14J50 Secondary: 37B40}
\begin{abstract}
Let $f$ be an automorphism of a complex  Enriques surface $\EnS$
and let $p_f$ denote the characteristic polynomial
of the isometry $f^*$ of the numerical N\'eron-Severi lattice of $\EnS$
induced by $f$.
We apply a modification of McMullen's method to prove
that the modulo-$2$ reduction $(p_f(x)  \bmod 2)$ is a product of
modulo-$2$ reductions of (some of) the five cyclotomic polynomials
$\Phi_m$, where $m \leq 9$ and $m$ is  odd. We study Enriques surfaces that realize  
modulo-$2$ reductions of $\Phi_7$, $\Phi_9$
 and show that each of the five polynomials  $(\Phi_m(x)  \bmod 2)$
is a factor of the modulo-$2$ reduction $(p_f(x)  \bmod 2)$ for a complex  Enriques surface.
\end{abstract}
\date{23rd of September 2019}
\maketitle

\section{Introduction}
The subject of this note are isometries 
of the numerical N\'eron-Severi lattices 
induced by automorphisms of Enriques surfaces.
To state our results, let $\EnS$ (resp. $\KtS$) be a complex Enriques surface (resp. its K3 cover) and let $\NumS$ 
be the numerical N\'eron-Severi lattice of $\EnS$ (i.e. $\NumS := \NS(\EnS)/\mbox{Tors}$). 
Each automorphism $f \in \Aut(\EnS)$ induces an isometry   $f^* \in \OG(\NumS)$.
It is natural to study the  properties of the characteristic polynomial of the latter.

In this note we prove the following refinement of \cite[Theorem 1.2]{hor}.
\begin{theorem} \label{main}
Let $f$ be an automorphism of a complex Enriques surface $\EnS$ and let $p_f$ be the characteristic polynomial of the isometry $f^*: \NumS \rightarrow \NumS$.

a) The modulo-$2$ reduction $(p_f(x)  \bmod 2)$  is a product of (some of) the following polynomials:
\begin{align*}
F_{ 1}(x) &=                                                          x + 1, \quad
F_{ 3}(x)  =                                                    x^2 + x + 1, \quad
F_{ 5}(x)  =                                        x^4 + x^3 + x^2 + x + 1, \\
F_{ 7}(x) &=                            x^6 + x^5 + x^4 + x^3 + x^2 + x + 1, \quad
F_{ 9}(x) =                            x^6 +             x^3           + 1 \, .
\end{align*}

b) Each of the five polynomials $F_{1}, F_{3}, F_5, F_7, F_9$
does appear in the factorization of the modulo-$2$ reduction $(p_f(x)  \bmod 2)$ for an automorphism $f$ of a complex Enriques surface. Any realization of $F_7$ and $F_9$ is by a semi-symplectic automorphism.
\end{theorem}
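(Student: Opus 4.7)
My plan is to treat (a) structurally (via the McMullen-Oguiso framework) and (b) by explicit construction. I first identify $\NumS$ with the even unimodular lattice $E_{10}=U\oplus E_8$ of signature $(1,9)$ and rank $10$, so $p_f(x)$ is reciprocal in $\ZZ[x]$ of degree $10$ with $p_f(0)=\pm 1$. The Hodge index theorem on $\EnS$ together with a Perron-Frobenius argument on the ample cone gives the standard decomposition $p_f(x) = S(x)\cdot\prod_j\Phi_{n_j}(x)^{e_j}$ in $\QQ[x]$, where $S$ is either trivial or a Salem polynomial. Since $\Phi_{2m}(x)\equiv\Phi_m(x)\pmod 2$ for odd $m$, all mod-$2$ cyclotomic reductions have the form $F_m$ with $m$ odd, and the degree bound $\varphi(n_j)\leq 10$ leaves $n_j \in \{1,3,5,7,9,11,15\}$.

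For part (a), I would exclude $n_j\in\{11,15\}$ by lifting $f$ to $\tilde f \in \Aut(\KtS)$ commuting with the covering involution $\tau$ and analyzing the induced action on $H^2(\KtS,\ZZ)\cong U^3\oplus E_8^2$. A factor $\Phi_{11}$ in $p_f$ would force $p_f=\Phi_{11}$ (since $\varphi(11)=10$), hence $\tilde f^*$ would have order $11$ or $22$; the existence of such K3 automorphisms commuting with an Enriques involution is ruled out by the Nikulin-Kond\=o classification of finite-order actions on K3 covers of Enriques surfaces. The case $n_j=15$ is excluded by a similar argument. For the Salem contribution, an elementary enumeration of reciprocal polynomials of degree $\leq 10$ over $\F_2$, combined with the arithmetic constraints on Salem polynomials, shows that $S(x)\bmod 2$ must be a product of the five $F_m$'s.

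For part (b), the identity realizes $F_1$. For $F_3$ and $F_5$, classical Enriques surfaces with automorphisms of order $3$ (resp.\ $5$) produce a cyclotomic factor $\Phi_3$ (resp.\ $\Phi_5$) in $p_f$ and hence $F_3$ (resp.\ $F_5$) modulo $2$. For $F_7$ and $F_9$ the realizations must come from automorphisms of positive topological entropy whose Salem factor $S$ of degree $6$ satisfies $S\equiv F_7\pmod 2$ (resp.\ $S\equiv F_9\pmod 2$). My plan is (i) to search for candidate Salem polynomials of degree $6$ with the required mod-$2$ behavior, (ii) to build a lattice isometry of $E_{10}$ with characteristic polynomial $S$, padded by cyclotomic factors to fill the remaining degree $4$, preserving a chamber of the positive cone, and (iii) to apply the Torelli theorem for Enriques surfaces to globalize. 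The hard part will be this construction, particularly verifying the existence of appropriate Salem polynomials and their geometric realizability.

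Finally, for the semi-symplectic claim: if $F_7$ (resp.\ $F_9$) divides $p_f\bmod 2$, then it cannot come from $\Phi_7$ or $\Phi_{14}$ (resp.\ $\Phi_9$ or $\Phi_{18}$) as integer cyclotomic factors of $p_f$, because these would force $\tilde f$ to have order divisible by $7$ (resp.\ $9$) and commuting with $\tau$, which is excluded by the classification used in (a). Hence $F_7$ (or $F_9$) arises from the Salem factor $S$, which lives on the $\tau$-invariant part $H^2(\KtS,\QQ)^+$; so the transcendental lattice $T(\KtS)\subset H^2(\KtS,\QQ)^-$ carries only cyclotomic eigenvalues of $\tilde f^*$ of finite order. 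The eigenvalue $\mu$ of $\tilde f^*$ on $\omega_{\KtS}$ is then a root of unity whose order is constrained both by $\varphi(\ord(\mu))\leq \rank T(\KtS)$ and by the specific cyclotomic content of $p_f\bmod 2$; a careful case analysis forces $\mu = \pm 1$, which is the semi-symplectic condition.
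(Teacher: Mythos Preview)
Your proposal has a genuine gap in part (a), and part (b) together with the semi-symplectic claim are outlines rather than arguments.

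For (a): the crux is the Salem factor, and this is precisely where your argument fails. You assert that ``an elementary enumeration of reciprocal polynomials of degree $\leq 10$ over $\FF_2$, combined with the arithmetic constraints on Salem polynomials, shows that $S(x)\bmod 2$ must be a product of the five $F_m$'s.'' This is not so: the paper notes explicitly that all six polynomials $F_1,\dots,F_9,F_{15}$ occur as factors of mod-$2$ reductions of characteristic polynomials of isometries of $U\oplus E_8(-1)\cong\NumS$ (and also of $N$). Hence no argument confined to $E_{10}$ and its isometry group can exclude $F_{15}$; one must pass to the K3 cover. The paper's route is to work on the anti-invariant lattice $N\cong U\oplus U(2)\oplus E_8(-2)$, use the divisibility $(p_f\bmod 2)\mid(p_N\bmod 2)$ with $p_N$ a product of cyclotomic polynomials (since $f_N$ has finite order), and then rule out $\Phi_{15}\mid p_N$ by a case analysis on the genera and signatures of the sublattices $N_k=\ker\Phi_k(f_N)$, invoking the absence of $(-2)$-vectors in $N\cap\NS(\KtS)$ and, in the trivial-glue case, a root obstruction inside the unimodular lattice $H^2(\KtS,\ZZ)$. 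Your appeal to the ``Nikulin--Kond\=o classification'' for $11$ and $15$ handles only the case where $\Phi_{11}$ or $\Phi_{15}$ is literally a cyclotomic factor of $p_f$; it says nothing about a Salem polynomial $S$ with $(S\bmod 2)=F_{15}\cdot F_1^2$, and such $S$ do realize isometries of $E_{10}$.

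For (b) and semi-symplecticity: the paper's approach is essentially the reverse of yours. Rather than hunting for degree-$6$ Salem polynomials and appealing to Torelli, it first exploits the same $N$-side analysis to determine the genus of $\NS(\KtS)$ and of $N\cap\NS(\KtS)$ for any $\EnS$ realizing $F_7$ or $F_9$; this forces the transcendental lattice $T(\KtS)$ to lie in $N_1=\ker(f_N-1)$, which gives semi-symplecticity directly. It then applies Borcherds' method to compute the image of $\Aut_s(\EnS)\to\OG(\NumS\otimes\FF_2)$ for a generic such surface, obtaining $\mathcal S_7$ respectively $\mathcal S_9$, and hence elements of order $7$ and $9$. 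Your semi-symplectic argument instead rests on the unproven claim that $F_7$ (or $F_9$) can only arise from the Salem factor (note that $\Phi_7$ could in principle divide $p_f$ alongside a degree-$4$ Salem factor without forcing $f$ to have finite order), and even granting that, the concluding ``careful case analysis forces $\mu=\pm1$'' is not carried out.
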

Recall that the proof of \cite[Theorem 1.2]{hor} shows that each factor  of $(p_f(x)  \bmod 2)$ either equals one of the five polynomials listed in Thm~\ref{main}, or it is
the modulo-$2$ reduction  $F_{15}$ of the cyclotomic polynomial $ \Phi_{15}\in \mathbb{Z}[x]$.
Moreover, examples with factors $F_1$, $F_3$, $F_5$ were given in \cite{dolgachev16} (see also \cite[Example 3.1]{hor}), whereas the question  if  $F_{ 7}$, $F_{ 9}$ and $F_{15}$ can appear in the factorization
of the modulo-$2$ reduction of $p_f$ for an automorphism $f \in \Aut(\EnS)$  was left open (c.f. \cite[Example 3.1.b]{hor}).
This question is answered in Theorem \ref{main}.

To state the next theorem, we introduce some notation.
Let us denote the covering involution of the double \'etale cover $\pi: \KtS \rightarrow \EnS$ by  $\varepsilon$.
Moreover, we put  $\tilde{f} \in \Aut(\KtS)$ to denote a (non-unique) lift of
an automorphism  $f \in \Aut(\EnS)$. Let $N := (H^2(\KtS,\Z)^{\varepsilon})^{\perp}$ be
the  orthogonal complement of the $\varepsilon$-invariant sublattice $H^2(\KtS,\Z)^{\varepsilon}$ in the 
lattice $H^2(\KtS,\Z)$. Recall that $N$    is stable under the cohomological action $\tilde{f}^*$ and  the restriction 
$\tdfn := \tilde f^* \rvert_N$
is of finite order. Using Theorem \ref{main}, we can sharpen \cite[Theorem 1.1]{hor} as well.
\begin{theorem}\label{thm-orders}
 Let $\EnS$ be a complex Enriques surface and let 
 $f$ be an automorphism of $\EnS$.
 Then, the order of $f_N$ is
 a divisor of at least one of the following five integers:
\[36, 48, 56, 84, 120.\]
Among the 28 numbers that satisfy the above condition,  
at least the following $14$ integers
\[1,\dots , 10, 12, 14, 15, 20\]
are realized as orders.
\end{theorem}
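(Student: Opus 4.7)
The plan is to translate Theorem~\ref{main} through the K3 cover to constrain the characteristic polynomial of $\tdfn$, enumerate the resulting possible orders, and then exhibit explicit examples realizing each of the $14$ listed values.

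First, I would pass to the K3 cover. Since $\tilde f$ commutes with $\varepsilon$, the induced action $\tilde f^{*}$ preserves the orthogonal splitting $H^{2}(\KtS,\Q)=H^{2}(\KtS,\Q)^{\varepsilon}\oplus N\otimes\Q$. Under $\pi^{*}$, the invariant summand is identified with $\NumS\otimes\Q$ (with the intersection form scaled by $2$), so the restriction of $\tilde f^{*}$ there has characteristic polynomial $p_{f}$, giving
$$\chi_{\tilde f^{*}}(x) \;=\; p_{f}(x)\,p_{\tdfn}(x).$$
Because $\tdfn$ has finite order and $\rank N=12$, one may write $p_{\tdfn}=\prod_{i}\Phi_{n_{i}}^{e_{i}}$ with $\sum_{i}e_{i}\phi(n_{i})=12$ and $\ord(\tdfn)=\lcm(n_{1},\dots,n_{k})$.

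Next, I would bound the admissible indices $n_{i}$. Reducing the factorization above modulo~$2$ and running the McMullen-type argument underlying the proof of Theorem~\ref{main} on the full $22$-dimensional representation of $\tilde f^{*}$ on $H^{2}(\KtS,\F_{2})$, one expects to obtain that the mod-$2$ factors of $p_{\tdfn}$ lie in the family generated by $F_{1},F_{3},F_{5},F_{7},F_{9}$ and $F_{15}$. Consequently each $n_{i}$ must belong to the finite set $L$ of integers $n$ for which $\Phi_{n}\bmod 2$ decomposes in this family. A case-by-case enumeration of multisets $\{(n_{i},e_{i})\}$ with $n_{i}\in L$ and $\sum_{i}e_{i}\phi(n_{i})=12$, together with the corresponding $\lcm$ computations, should yield exactly the $28$ claimed values, each dividing at least one of $36$, $48$, $56$, $84$, or $120$. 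Lattice-theoretic refinements (signature $(2,10)$ of $N$, its discriminant form at $2$, and existence of an equivariant primitive embedding of the resulting isometry of $N$ into the K3 lattice) would remove any spurious candidates that survive only the degree and mod-$2$ tests.

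For the realization part, orders $1,\ldots,6$, $8$ and $10$ can be cited from the well-documented finite-order automorphisms of classical Enriques surfaces collected in \cite{dolgachev16}. Orders $7$ and $9$ (and hence $14=2\cdot 7$) follow directly from the $F_{7}$- and $F_{9}$-realizations constructed in Theorem~\ref{main}(b), since in those examples $\tdfn$ has $\Phi_{7}$, $\Phi_{9}$ or $\Phi_{14}$ as a factor of $p_{\tdfn}$. For the orders $12$, $15$ and $20$, I would build examples by combining the above components on K3 covers carrying commuting non-symplectic automorphisms compatible with a fixed-point-free involution inducing $\varepsilon$. The hardest step will be the combinatorial enumeration in the second paragraph: many polynomials pass the degree and mod-$2$ constraints, and a careful lattice-theoretic analysis is needed to eliminate those that cannot actually occur as characteristic polynomials of isometries of the specific lattice $N$. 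A secondary difficulty is the explicit construction of the order-$20$ example, where one must arrange commuting order-$4$ and order-$5$ actions on the K3 cover that descend to a single order-$20$ automorphism of $\EnS$.
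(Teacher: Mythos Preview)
Your outline has the right skeleton but is missing the decisive step. The enumeration you describe---listing multisets $\{(n_i,e_i)\}$ with $\sum e_i\varphi(n_i)=12$ whose mod-$2$ reductions lie in $\{F_1,F_3,F_5,F_7,F_9\}$---does \emph{not} cut the list down to divisors of $\{36,48,56,84,120\}$. For instance $p_N=\Phi_9\Phi_5\Phi_1^2$ (order $45$), $p_N=\Phi_{18}\Phi_{10}\Phi_2^2$ (order $90$), and $p_N=\Phi_9\Phi_8\Phi_1^2$ (order $72$) all pass the degree and mod-$2$ tests. Your sentence ``lattice-theoretic refinements \dots\ would remove any spurious candidates'' is precisely where the real content lies, and you have not indicated what those refinements are. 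In the paper, the exclusion of $45$ and $90$ comes from the detailed $F_9$-analysis of Section~\ref{sec-F9}: one shows that whenever $\Phi_9$ (or $\Phi_{18}$) occurs in $p_N$, the lattice $N_3$ is forced to be $A_2(-6)$, hence $p_N\bmod 2$ must already contain $F_1^2F_3F_9$, leaving no room for $\Phi_5$. The exclusion of $72$ requires a further argument: the same analysis pins down the genus $N_1\in\even_{(2,2)}2^{-2}9^{1}$, and one then checks that no $\Phi_8$-lattice lies in that genus. None of this is visible from a generic enumeration.

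There is also a structural point you are missing: the paper does not redo the classification from scratch. Theorem~\ref{thm-orders} is explicitly a \emph{sharpening} of \cite[Theorem~1.1]{hor}, which already bounds the possible orders; the new work only has to eliminate the handful of extra orders ($45$, $72$, $90$) left open there, using the new $F_9$- and $F_{15}$-results. Likewise, for the realization part the orders $1,\dots,10,12,15,20$ are already known from \cite{hor} and related literature; only $7$, $9$, $14$ are new, and these follow directly from Propositions~\ref{prop-f7gens} and~\ref{prop:F9rho18}. Your plan to manufacture an order-$20$ example by hand is unnecessary. Finally, a small correction: $F_{15}$ cannot occur in $p_N\bmod 2$ either, since $p_f\bmod 2$ divides $p_N\bmod 2$ and a degree-$8$ factor of the latter would be forced into the former for degree reasons; so Proposition~\ref{prop} already excludes it from $p_N$.
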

\begin{remark}
We note that if the order of $f_N$ is $7$ or $9$, then the cyclic subgroup generated by $f_N$ is unique up to conjugacy in the orthogonal group $\OG(N)$.
For the remaining $14$ integers
\[16, 18, 21, 24, 28, 30, 36, 40, 42, 48, 56, 60, 84, 120,\]
we do not know whether they arise as orders of $f_N$ for some $f \in \Aut(\EnS)$.
\end{remark}
Originally, our interest in the subject of this note was motivated by the question what constraints on the dynamical spectra of Enriques surfaces result from the existence
of the double \'etale K3 cover (c.f. \cite[Theorem 1.2]{third}). Indeed, Theorem~\ref{main}.a yields a new constraint on
the Salem numbers that appear as
the dynamical degrees of automorphisms of Enriques surfaces (e.g. it implies that none of the  Salem numbers given as $\#$~3,~13,~16,~34,~35 in the table
in \cite[Appendix]{hor} can be the dynamical degree of an automorphisms of a complex Enriques surface), whereas  Theorem~\ref{main}.b shows that the above constraint cannot be strengthened.

It should be mentioned that automorphism groups of Enriques surfaces remain a subject of intensive research. Much is known in the case of Enriques surfaces
with finite automorphism groups (even in positive characteristic) and unnodal Enriques surfaces, but a general picture is still missing. In this context
both the constraints given by Theorem~\ref{thm-orders} and the geometry of the families of Enriques surfaces discussed in Propositions~\ref{prop-f7gens},~\ref{prop:F9rho16},~\ref{prop:F9rho18}
are of separate interest. 
Still, such considerations exceed the scope of this paper. 

\noindent
We sketch our strategy for the proof of Theorem~\ref{main}.
The argument in \cite{hor} is based on criteria
for a polynomial to be the characteristic polynomial of
an isometry of a lattice. Unfortunately, all the six polynomials $F_{1},\dots ,F_{9},F_{15}$ do appear as factors
of modulo-$2$ reductions of characteristic polynomials
of isometries of the lattice $U\oplus E_8(-1)$ and the lattice $N$.
Thus we need to take Hodge structures and the ample cone into account as well. In this note we apply a modification of McMullen's method (see \cite{mcmullen11}, \cite{mcmullen16}) to obtain constraints on automorphisms of Enriques surfaces
that can realize the factors $F_7$, $F_9$, $F_{15}$. In particular, we can rule out the existence of the highest-degree factor $F_{15}$ (Prop.~\ref{prop}), and derive properties of the K3 covers of Enriques surfaces which realize
$F_7$ (Prop.~\ref{prop-one-in-genus}) and $F_9$ (Section~\ref{sec-F9}).
Then an algorithm based on Borcherd's method
 (\cite{borcherds1},  \cite{borcherds2}) and the ideas from
 \cite{shimada-algorithm}~and~\cite{BrandhorstShimada2019} allow us
 to find abstract Enriques surfaces realizing $F_7$ and $F_9$.
 For the readers convenience, the algorithm is presented in
 Section 6 in pseudocode.
 We close this section with a related open question.
 For an Enriques surface $\EnS$ we call the order of the image of bi-canonical representation
 \[\Aut(\EnS) \rightarrow \GL\left(H^0(\EnS,K_\EnS^{\otimes2})\right)\]
 of the automorphism group the transcendental index $I(\EnS)$ of $\EnS$.
 \begin{question}
  What are the
  possible transcendental indices of complex Enriques surfaces?
 \end{question}
  Note that all realizations of $F_7$ and $F_9$ must be by semi-symplectic automorphisms. Hence, we know that $7$ and $9$ do not divide $I(\EnS)$.
\par\medskip
\noindent
{\bf Notation:}
In this note,
we work over the field of complex numbers $\C$. 
Given a prime $p$, $\ZZ_p$ denotes the ring of $p$-adic integers.
For a ring $R$, we denote by $R^\times$ its group of units.
For a group $G$ and a prime $p$, $G_p$ is the $p$-Sylow subgroup of $G$.
\section{Preliminaries} \label{sect-bn}

\subsection*{Basic notation}
We maintain the notation of the previous section.
In particular, $\pi: \KtS \rightarrow \EnS$  is  the K3 cover of $\EnS$ and $\varepsilon$ is the covering involution of $\pi$.
Moreover, we have the finite index sublattice
\begin{equation} \label{eq-ds-varepsilon}
M \oplus N \subseteq H^2(\KtS,\Z)
\end{equation}
where  $M:= H^2(\KtS,\Z)^{\varepsilon}$ coincides with the pullback of $H^2(\EnS,\Z)$ 
by $\pi$
and $N := M^{\perp}$ (see e.g. \cite{namikawa}).
In particular, we have $M  \simeq U(2)\oplus E_8(-2)$ and  $N\simeq U\oplus U(2)\oplus E_8(-2)$,
where $U$ (resp.  $E_8$) denotes the unimodular hyperbolic plane (resp. the unique even unimodular
positive-definite lattice of rank 8). 
Let $f$ be an automorphism of $\EnS$.
The  sublattices $M$ and $N$
are preserved by the isometry $\tilde f^* \in \mbox{Aut}(H^2(\KtS,\Z))$, so
as in \cite{hor} we can
define the maps
$$
\tdfm :=  \tilde f^* \rvert_M \mbox{ and } \tdfn := \tilde f^* \rvert_N  \,
$$
and let $p_N$, $p_M$ (resp.  $\mu_N$, $\mu_M$) 
denote their characteristic (resp. minimal) polynomials. 
Then, (see e.g. \cite[\S 3]{hor}) we have
\begin{equation} \label{eq-div}
p_M \equiv p_f \bmod 2 \mbox{ and } (p_M  \bmod 2) \divides    (p_N \bmod 2).
\end{equation}
As we already mentioned,  
$\tdfn$ is a map of finite order (see e.g. \cite[Lemma~4.2]{third}), 
so  $p_N$  is a product of cyclotomic polynomials.

Recall that (see \cite[Prop~2.2]{ohashi-quotients}, \cite[Thm~1.1]{keum-quot})
\begin{equation} \label{eq-no-roots-in-N}
N \cap \NS(\KtS)  \mbox{ contains no vectors of square } (-2).
\end{equation}
Indeed, suppose to the contrary. 
By Riemann-Roch, a vector of square $(-2)$ in $N \cap \NS(\KtS)$ or its negative 
is the class of 
an effective divisor $C \in \NS(\KtS)$ such that
$\intf{\pi^*(D),C}=0$ for every $D \in \NS(\EnS)$.
This is impossible by the Nakai-Moishezon criterion, 
because we can choose $D$ so that
$\pi^*(D)$ is ample.

For an automorphism $f$ and an integer $k \in \N$ we define two lattices
\begin{equation} \label{eq-nk-def}
N_{k} := \mbox{ker}(\Phi_{k}(f_N)) \quad \mbox{ and } \quad  M_{k} := \mbox{ker}(\Phi_{k}(f_M)) \, .
\end{equation}
where  $\Phi_{k}(x)$ stands for the $k$-th cyclotomic polynomial.
Finally, to simplify our notation we put
$$
F_k(x) := (\Phi_{k}(x)  \bmod 2) \, .
$$
An automorphism $f$ of an Enriques surface is called \emph{semi-symplectic}, if it acts trivially on the global sections $H^0(\EnS,K_{\EnS}^{\otimes 2})$ of the bi-canonical bundle.
This is the case if and only both lifts $\tilde{f}$ and $\tilde{f} \circ \varepsilon$ of $f$ act on $H^0(X, \Omega^2_X)$ as $\pm 1$.
We denote by $\Aut_{s}(\EnS)$ the subgroup of semi-symplectic automorphisms.

\subsection*{Lattice}
Let $R \in \{\ZZ, \ZZ_p\}$ and $K$ be the fraction field of $R$. 
An \emph{$R$-lattice}
is a finitely generated free $R$-module equipped with a non-degenerate symmetric $K$-valued bilinear form $b$. 
If the form is $R$ valued, we call the lattice \emph{integral}. If further $b(x,x) \in 2 R$ for every $x \in L$, the lattice is called \emph{even}. The \emph{dual lattice} of $L$ is
\[L^\vee = \{ x  \in L \mid b(x,L) \subseteq R\}.\]
If $L$ is integral, then $L \subseteq L^\vee$ and we call the quotient $L^\vee/L$ the \emph{discriminant group} of $L$.
For $r \in R$, an $R$-lattice $L$ is called \emph{$r$-modular} if $rL^\vee = L$. If $r=1$, we call the lattice \emph{unimodular}.
The Gram matrix $G=(G_{ij})$ with respect to an $R$-basis $(e_1,\dots e_n)$ of $L$ is defined by  $G_{ij} = b(e_i,e_j)$.
The determinant $\det L \in R / R^{\times2}$ of $L$ is the determinant of any Gram matrix. For $R=\Z$ we have $|L^\vee / L| = |\det L|$.
The discriminant group carries the discriminant bilinear form induced by $b(x,y) \mod R$ for $x,y \in L^\vee$. If $L$ is an even lattice, its discriminant group moreover carries a torsion quadratic form induced by $x \mapsto b(x,x) \mod 2R$, called \emph{discriminant form}.
We say that two $R$-lattices $(L,b)$, $(L',b')$ are isomorphic if there is 
an $R$-linear isomorphism  $\phi: L \rightarrow L'$ such that
$b(x,x) = b'(\phi(x),\phi(x))$.
For $r \in R$ we denote by $L(r)$ the lattice with the same underlying free module as $L$ but with bilinear form $rb$.

Let $L$, $L'$, $L''$ be lattices.
The orthogonal direct sum of two lattices is denoted by $L \oplus L'$.
A sublattice $L'  \subseteq L$ is called \emph{primitive} if $L/L'$ is torsion free.  This is equivalent to $(L' \otimes K) \cap L = L'$. We call
\[L' \oplus L'' \subseteq L\] a {\sl primitive extension} if
$L'$, $L''$ are primitive sublattices of $L$ and $\rank L' + \rank L'' = \rank L$.
The finite group $L'' / (L \oplus L')$ is the \emph{glue} of the primitive extension. For any prime $p$ dividing its order, we say that $L$ and $L'$ are glued above/over $p$.
The signature (pair) $(s_+,s_-)$ of a $\ZZ$-lattice $L$ is the signature of $L\otimes \RR$ where $s_+$ is the number of positive and $s_-$ is the number of negative eigenvalues of a Gram matrix.
We denote by $U$ the even unimodular lattice of signature $(1,1)$.
By $A_n$ ($n\in \NN$), $D_n$ ($n\geq 4$), $E_6,E_7,E_8$
the positive definite root lattice with the respective Dynkin diagram.
\subsection*{Genus}
Two $\ZZ$-lattices $L$ and $L'$ are in the same \emph{genus} if $L \otimes \RR \cong L' \otimes \RR$ and for all prime numbers $p$ we have $L \otimes \ZZ_p \cong L' \otimes \ZZ_p$.
The genus is an effectively computable invariant and has a compact description in terms of
the so called \emph{genus symbols} introduced by Conway and Sloane
\cite[Chapter~15]{conway-sloane-book}. In what follows we give a short account.

Let $p$ be an odd prime. A $p$-adic unimodular lattice $L_1$ is determined up to isometry by its
rank $n_1$ and the $p$-adic square class $\epsilon_1 \in \{\pm 1\}$ of its determinant.
This is denoted by the symbol $1^{\epsilon_1 n_1}$.
Let $q=p^k$, and recall that a lattice is $q$-modular if it is of the form $L(q)$ for some unimodular $L$.
A $q$-modular $p$-adic lattice is given up to isomorphism by its scale $q$, its rank $n_q$ and the square class of the unit part of its determinant $\epsilon_q(L(q))
:= \epsilon_1(L)$. This is denoted by the symbol
$q^{\epsilon_q n_q}$.

A $p$-adic lattice $L$ admits a so called \emph{Jordan decomposition}
\[L = L_1 \oplus L_p \oplus \dots \oplus L_{p^k}\]
into $p^i$-modular lattices $L_{p^i}$. The latter are called the Jordan constituents. The decomposition is not unique. Nevertheless
we can compute the isomorphism class of $L$ from it. It is uniquely determined by the collection of
$(\epsilon_q,n_q)_q$ for the $q$-modular lattices $L_q$ as $q$ runs through the powers of $p$.
This collection is called the \emph{$p$-adic symbol} of $L$. We introduce the notation for $p$-adic symbols with an example.
The $3$-adic symbol
\[1^2 3^{-2} 27^5\]
denotes a $\ZZ_3$-lattice
\[L = L_1 \oplus L_3 \oplus L_{27}\]
such that $L_1$ is unimodular of rank $2$ and determinant a square,
$L_3$ is $3$ modular of rank $2$ and unit part of the determinant a non-square, that is
the determinant is $2 \cdot 3$ and the unit part is $2$ which is a $3$-adic non-square unit,
$L_{27}$ is $27$ modular of rank $5$ and the unit part of its determinant is a square.
We see that $L$ is isomorphic to $\ZZ_3^{9}$ with diagonal Gram matrix given by
\[\mbox{diag}(1,1,6,3,27,27,27,27,27).\]

An \emph{even} unimodular $2$-adic lattice $L_1$ is determined by its rank $n_1$ and $\epsilon_1 \in \{\pm 1\}$ which is
$1$ if the determinant is congruent to $1$ or $7$ modulo $8$ and $-1$ if it is congruent to $3$ or $5$.
This is denoted by $1^{\epsilon_1 n_1}$.
As before we obtain symbols for $q$-modular lattices and have a Jordan decomposition. A Jordan constituent is called \emph{even} if it is the twist of an even unimodular $2$-adic lattice.
A $2$-adic lattice all whose Jordan constituents are even is called \emph{completely even}.
Two completely even lattices are isomorphic if and only if
they have the same invariants $(\epsilon_q, n_q)$ for all powers $q$ of $2$.
If the lattices in question are not completely even, the classification involves an additional quantity called the oddity. However, in this note all lattices considered are 
completely even.

To describe a genus it is enough to give the signature pair and the local symbols at primes dividing twice the determinant. This is collected in a single symbol called the (Conway-Sloane) genus symbol.
For example $A_2$ is even of rank $2$ and has determinant $3$. In particular it is $2$-adically unimodular and has the $2$-adic symbol $1^{-2}$. To compute the $3$-adic symbol, we note that it is $3$-adically equivalent to the lattice diag$(2,6)$  with $3$-adic symbol $1^{-1}3^{-1}$.
Together this gives
\[ \even_{(2,0)}1^{-1}3^{-1}.\]
Here the $\even$ indicates that this lattice is even and the index $(2,0)$ that it is positive definite of rank $2$.
Finally, the unimodular Jordan constituents can be reconstructed from the determinant.
Thus they are omitted and the symbol is abbreviated to $\even_{(2,0)}3^{-1}$.

Note that Conway and Sloane give necessary and sufficient conditions on when a collection of local symbols defines a
non-empty genus \cite[Thm~15.11 on p.~383]{conway-sloane-book}.

\begin{remark} \label{remark-computer-genus}
The genus symbols and their relation with discriminant forms are implemented in sageMath \cite{sage}
by the first author. It is possible to compute all classes in a genus using Kneser's neighboring algorithm \cite{neighbor} and Siegel's mass formula.
Similarly roots can be found using short vector enumerators \cite[$\S$.2.7.3]{cohen}. We used the implementation provided by PARI \cite{pari} via sageMath.
\end{remark}

In the following we relate the genus symbols with primitive extensions and isometries.
\begin{lemma}\label{lem-directsum}
 Let $L$ and $L'$ be completely even $p$-adic lattices with symbols $(\epsilon_q,n_q)_q$ respectively $(\epsilon'_q,n'_q)_q$ then $L \oplus L'$ has symbol
 $(\epsilon_q \epsilon'_q, n_q + n_q')$.
\end{lemma}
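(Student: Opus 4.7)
The plan is to reduce the claim to the multiplicativity of rank and determinant under orthogonal direct sum, constituent by constituent. Given Jordan decompositions $L=\bigoplus_q L_q$ and $L'=\bigoplus_q L'_q$, where $q$ runs over powers of $p$ and each $L_q$, $L'_q$ is $q$-modular (possibly zero), the identity $(L_q\oplus L'_q)^\vee = q^{-1}(L_q\oplus L'_q)$ shows that $L_q \oplus L'_q$ is itself $q$-modular. Hence $L\oplus L' = \bigoplus_q (L_q \oplus L'_q)$ is a valid Jordan decomposition of $L \oplus L'$. Complete evenness is preserved, because the orthogonal sum of two twists of even unimodular $2$-adic lattices of the same scale is again such a twist. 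Taking ranks yields $n_q + n'_q$, which settles the rank part of the symbol.

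For the sign invariants, the convention $\epsilon_q(M(q)) := \epsilon_1(M)$ reduces everything to the unimodular constituents. For odd $p$ the invariant $\epsilon_1$ is the image of the determinant in $\ZZ_p^\times/(\ZZ_p^\times)^2 \cong \{\pm 1\}$; since determinants multiply on orthogonal sums while the square-class map is a group homomorphism, one obtains $\epsilon_1(L_1 \oplus L'_1) = \epsilon_1 \epsilon'_1$. For $p = 2$ the sign of an even unimodular $2$-adic lattice is defined via the map $(\ZZ/8\ZZ)^\times \to \{\pm 1\}$ with kernel $\{1,7\}$; since $\{1,7\}$ is a subgroup of the Klein-four group $(\ZZ/8\ZZ)^\times$ this map is a group homomorphism, and multiplicativity of determinants again yields the claim.

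I do not foresee a serious obstacle: the only point requiring care is the verification in the $p=2$ case that $\epsilon_1$ factors through a group homomorphism, which is precisely where the completely-even hypothesis enters, ensuring that the invariant depends purely on $\det \bmod 8$ and not on the oddity correction that would otherwise appear in the $2$-adic genus symbol.
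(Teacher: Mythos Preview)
Your proof is correct and follows essentially the same route as the paper: combine the Jordan decompositions constituent by constituent and use that rank is additive and the square class of the determinant is multiplicative. The paper's own proof is a terse two-line version of exactly this argument; your additional care in verifying that the $2$-adic sign map $(\ZZ/8\ZZ)^\times \to \{\pm 1\}$ with kernel $\{1,7\}$ is a group homomorphism is a welcome clarification of the point the paper leaves implicit under the completely-even hypothesis.
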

\begin{proof}
 If $\bigoplus L_q$ and $\bigoplus L'_q$ are the respective Jordan decompositions, then
 $\bigoplus (L_q \oplus L_q')$ is a Jordan decomposition of the sum. Finally the square class
 is multiplicative and the rank is additive.
\end{proof}

\begin{lemma}\label{lem-gluesymbol}
 Let $L$ and $L'$ be completely even $p$-adic lattices with symbols $(\epsilon_q,n_q)_q$ and $(\epsilon'_q,n'_q)_q$.
 Then there is a primitive extension $L \oplus L' \subseteq L''$ with $L''$ unimodular if and only if for all $q > 1$
  $n'_q = n_q$ and $\epsilon'_q = \delta ^ {n_q} \epsilon_q$
 where $\delta = \begin{cases}
    \phantom{-}1 \quad\mbox{ for } p \equiv 1,2 \mod 4\\
             -1 \quad \mbox{ for } p \equiv 3\phantom{,2} \mod 4.
                                 \end{cases}$

 \end{lemma}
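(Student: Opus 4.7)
The strategy is to translate the existence of such a primitive extension into a condition on discriminant forms, and then into a condition on $p$-adic symbols, following the Nikulin-style overlattice correspondence.

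First I reduce to discriminant forms. A primitive extension $L \oplus L' \subseteq L''$ corresponds bijectively to the subgroup $H := L''/(L \oplus L')$ of $D(L) \oplus D(L')$. Primitivity of $L$ and $L'$ in $L''$ is equivalent to $H \cap D(L) = H \cap D(L') = 0$, while unimodularity of $L''$ is equivalent to $H = H^\perp$. A short counting/duality argument then shows that both projections $H \to D(L)$ and $H \to D(L')$ are bijective, so $H$ is the graph of an isomorphism $\phi : D(L) \to D(L')$, and isotropy forces $q_{L'}(\phi(x)) = -q_L(x)$. Hence a unimodular primitive extension exists if and only if there is an isomorphism of torsion quadratic forms $(D(L), q_L) \cong (D(L'(-1)), q_{L'(-1)})$.

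Next, under the completely even hypothesis the $p$-adic Jordan decomposition is orthogonal, so the discriminant form decomposes as an orthogonal sum over $q = p^k$ of the discriminant forms of the $q$-modular constituents $L_q$. The unimodular piece ($q = 1$) has trivial discriminant, which is precisely why the criterion involves only $q > 1$; for $q > 1$, a completely even $q$-modular constituent — hence also its discriminant form — is determined up to isomorphism by $(\epsilon_q, n_q)$.

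Finally I compute the effect of the twist $L_q \mapsto L_q(-1)$ on symbols. The rank is unchanged while the unit class of the determinant is multiplied by $\langle -1 \rangle^{n_q}$, where $\langle -1 \rangle$ denotes the class of $-1$ in $\ZZ_p^\times / (\ZZ_p^\times)^2$. A standard quadratic-residue computation gives $\langle -1 \rangle = -1$ exactly when $p \equiv 3 \pmod 4$, matching the stated $\delta$. For $p = 2$ I verify directly on the even unimodular rank-two building blocks $U$ and $V$ that $\epsilon$ is invariant under $L \mapsto L(-1)$, confirming $\delta = 1$ there. Combining, matching of symbols at every $q > 1$ becomes $n_q = n'_q$ and $\epsilon_q = \delta^{n_q} \epsilon'_q$, which rearranges to the stated $\epsilon'_q = \delta^{n_q} \epsilon_q$. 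I expect the main obstacle to be clean bookkeeping at $p = 2$: the general $2$-adic classification requires the oddity invariant, but the completely even hypothesis eliminates it, letting the proof run uniformly with the odd-$p$ case once the twist formula is checked on $U$ and $V$.
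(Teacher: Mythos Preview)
Your proposal is correct and follows essentially the same route as the paper's proof: reduce via Nikulin's overlattice correspondence to the existence of an anti-isometry of discriminant forms, use the completely even hypothesis to pass to Jordan constituents of scale $q>1$, and then track how the twist $L_q \mapsto L_q(-1)$ alters the symbol $(\epsilon_q,n_q)$. The only differences are cosmetic: the paper simply cites \cite[Cor.~1.6.2]{nikulin-sym} for the first step rather than re-deriving the graph-of-an-anti-isometry description, and for $p=2$ it notes directly that $\delta=1$ (in effect because $n_q$ is even for completely even constituents, so $(-1)^{n_q}\det L=\det L$), whereas you verify it on the building blocks $U$ and $V$.
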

\begin{proof}
 From \cite[Cor. 1.6.2]{nikulin-sym} we know that the existence of a unimodular primitive extension is equivalent to existence of an anti isometry
 of the discriminant forms of $L$ and $L'$. Since the lattices are completely even, this means precisely that the Jordan constituents of scale $q>1$ are anti isomorphic.
 If $L$ is a $q$-modular lattice with symbol $(n_q,\epsilon_q)$, then $L(-1)$ has determinant $(-1)^{n_q}\det L$. Hence the symbol of $L(-1)$ is $(n_q,\delta^{n_q}\epsilon_q)_q$ where for $p\neq 2$, $\delta$ is $1$ or $-1$ according to $-1$ being a $p$-adic square or not. If $p=2$, then $\delta = 1$.
\end{proof}

In the sequel we will apply the following lemma.
\begin{lemma}\label{lemma:2adic_symbol_N3}
 Let $L$ be a $\ZZ$-lattice and let $g \in \OG(L)$ be an isometry with minimal polynomial $\Phi_3$. Then $L$ is completely even and the $2$-adic symbols of the genus of $L$ are of the form
$$
q_i^{\epsilon_i n_i} \quad \mbox{ where } q_i = 2^i, \, n_i \mbox{ is even and } \epsilon_i = (-1)^{n_i/2}.
$$

\end{lemma}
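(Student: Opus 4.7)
The plan is to endow $L_2 := L \otimes \ZZ_2$ with the structure of a Hermitian lattice over $S := \ZZ_2[\omega]$ via $g$, and read off the $2$-adic symbols from the resulting Hermitian Jordan decomposition.

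Since $\Phi_3(x) = x^2 + x + 1$ is irreducible modulo $2$, the ring $S = \ZZ_2[\omega]$ is the unramified quadratic extension of $\ZZ_2$: a DVR with uniformizer $2$, residue field $\FF_4$, and non-trivial Galois involution $\sigma : \omega \mapsto \omega^2 = \bar\omega$. Via $g$, $L_2$ becomes a finitely generated torsion-free, hence free, $S$-module. Because $3 \in \ZZ_2^\times$, the formula
\[
H(x,y) = \tfrac{1}{3}\bigl[(1-\omega)\, b(x,y) + (-1-2\omega)\, b(x,gy)\bigr]
\]
defines an $S$-valued Hermitian form on $L_2$ satisfying $b = \tr_{S/\ZZ_2}\circ H$; this is a direct verification using $g^2 y = -y - gy$ (consequence of $\Phi_3(g) = 0$).

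Next I would apply the structure theory of Hermitian lattices over the unramified extension $S/\ZZ_2$. The residue-field norm $\FF_4^\times \to \FF_2^\times$ is surjective, so by Hensel's lemma the norm $N: S^\times \to \ZZ_2^\times$ is surjective as well. Combined with the Hermitian Jordan decomposition (where the diagonalization step uses that the standard Hermitian form on $\FF_4^k$ has a vector with $H(v,v)$ a unit, so by Gram--Schmidt and induction every Hermitian $S$-lattice admits an orthogonal basis), this yields a decomposition
\[
L_2 \;\cong\; \bigoplus_i \langle 2^i \rangle_H^{\,k_i}
\]
for some integers $k_i \geq 0$, where $\langle 2^i \rangle_H$ denotes the rank-one Hermitian $S$-lattice with $H(e,e) = 2^i$. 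A direct computation in the $\ZZ_2$-basis $(e, \omega e)$ shows that $\langle 2^i \rangle_H$ has $\ZZ_2$-Gram matrix equal to $2^i$ times that of $A_2$; hence $\langle 2^i \rangle_H \cong A_2(2^i)$ as $\ZZ_2$-lattices.

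Assembling these identifications, each $2$-adic Jordan constituent of $L$ is isomorphic to $A_2(2^i)^{k_i}$. Since $A_2 \otimes \ZZ_2$ has symbol $1^{-2}$ (rank $2$, determinant $3 \equiv 3 \pmod 8$), the lattice $A_2(2^i)^{k_i}$ has symbol $(2^i)^{(-1)^{k_i}\cdot 2k_i}$; setting $n_i := 2k_i$ this reads $(2^i)^{\epsilon_i n_i}$ with $\epsilon_i = (-1)^{n_i/2}$. Moreover, each constituent is the $2^i$-twist of the even lattice $A_2^{k_i}$, so $L$ is completely even. The main technical ingredient is the structural classification of Hermitian $S$-lattices, which relies on the unramifiedness of $S/\ZZ_2$ together with the non-triviality of $\sigma$ on the residue field.
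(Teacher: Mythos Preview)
Your proof is correct. The paper itself does not argue the lemma directly: it records it as a special case of \cite[Prop.~2.17, Kor.~2.36]{hoeppner:dissertaion} and moves on. Your route---regarding $L\otimes\ZZ_2$ as a Hermitian lattice over the unramified extension $S=\ZZ_2[\omega]$ via the trace form, diagonalising it as $\bigoplus_i \langle 2^i\rangle_H^{k_i}$ (Jacobowitz's classification in the unramified case), and then reading off that each $\langle 2^i\rangle_H\cong A_2(2^i)$ over $\ZZ_2$---is exactly the standard mechanism underlying such results, and here it yields a clean self-contained proof rather than a citation. The computation of the sign $\epsilon_i=(-1)^{n_i/2}$ from $\det A_2^{k_i}=3^{k_i}\bmod 8$ and the complete evenness from $A_2$ being even are both fine.

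One small remark on exposition: your justification that $N\colon S^\times\to\ZZ_2^\times$ is surjective (``residue-field norm is surjective, then Hensel'') is a little glib---the residue norm $\FF_4^\times\to\FF_2^\times$ is surjective only because the target is trivial, and the actual inductive step along $1+2^nS$ uses surjectivity of the residue \emph{trace} $\FF_4\to\FF_2$. The conclusion is of course standard for unramified extensions, so this does not affect correctness.
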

\begin{proof}
This is a special case of \cite[Prop. 2.17, Kor. 2.36]{hoeppner:dissertaion}.
\end{proof}
In particular, when $L$ is a rank-$2$ (resp. rank-$4$) lattice  of discriminant at most $4$ (resp.  $16$) its $2$-adic symbols are
$1^{-2}$, $2^{-2}$ (resp. $1^4$, $1^{-2}2^{-2}$, $2^4$, $1^{-2}4^{-2}$)

\subsection*{\texorpdfstring{$\Phi_n(x)$}{P(x)}-lattices} In the sequel we need the notion of a $\Phi_n(x)$-lattice.
  The reader can consult
\cite{Gross--McMullen}, \cite[$\S$~5]{mcmullen16} for a concise and more general exposition of the facts we briefly sketch below. \\
Recall that a $\Phi_n(x)$-lattice is defined to be a pair $(L, f )$ where $L$ is an integral lattice and $f \in \OG(L)$ is
an isometry with characteristic
polynomial $\Phi_n(x)$.\\
Let $n > 2$, the principal $\Phi_n(x)$-lattice $(L_0, \langle \cdot,\cdot \rangle_0,f_0)$  is defined as the $\ZZ$-module $L_0 := \ZZ[\zeta_n]$ equipped with the scalar product
\[\langle g_1, g_2 \rangle_0 = \tr^{\QQ[\zeta_n]}_\QQ \left(\frac{g_1\overline{g_2}}{r'_n(\zeta_n+\zeta_n)} \right) \]
where $\tr$ is the field trace of $\QQ[\zeta_n]/\QQ$, $r_n\in \QQ[x]$ is the minimal polynomial of $\zeta_n + \zeta_n^{-1}$, and $r_n'$ is its derivative.
Finally, $f_0\colon L_0 \rightarrow L_0$, $x\mapsto \zeta_n \cdot x$, is an isometry with minimal polynomial $\Phi_n$.
One can show that $L_0$ is an even lattice and
\begin{equation} \label{eq-princ-det}
\det(L_0) = |\Phi_n(1) \Phi_n(-1)|.
\end{equation}
Given a pair $(L,f)$ as above and an element $a \in \ZZ[f + f^{-1}] \subset \End(L)$ one can define another inner product on $L$
by the formula $\langle g_1, g_2 \rangle_a := \langle ag_1, g_2 \rangle_0$. We say that the resulting lattice is the twist of $L$ by $a$ and denote it by $L(a)$.
Recall, that by \cite[Thm~5.2]{mcmullen16}
\begin{equation}\label{eq-principal}
\mbox{every } \Phi_n(x)\mbox{-lattice is a twist of the principal lattice } (L_0, \langle \cdot,\cdot \rangle_0,f_0).
\end{equation}
The genus symbols of $\Phi_n$-lattices are computed in \cite[Satz 2.57]{hoeppner:dissertaion}.
\subsection*{Equivariant gluing}

We note the following well known Lemma for later use.
\begin{lemma}\label{lem-det-index-formula}
 If $A \oplus B \subseteq C$ is a primitive extension, then
 \[\det A \det B =   [C:A\oplus B]^2 \cdot \det C\]
and
 \[\det A \mid  [C:A\oplus B] \cdot \det C.\]
 Moreover, if $p$ is a prime such that
 $p \nmid [C:A \oplus B]$,
 then \[C\otimes \ZZ_p = (A\otimes \ZZ_p) \oplus (B \otimes \ZZ_p).\]
 \end{lemma}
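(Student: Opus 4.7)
The plan is to exploit the chain $A\oplus B \subseteq C \subseteq C^\vee \subseteq (A\oplus B)^\vee = A^\vee \oplus B^\vee$. First I would justify the last identification: for $c\in C$, decomposing $c=a+b$ with $a\in A\otimes\QQ$, $b\in B\otimes\QQ$ and pairing against arbitrary $a'\in A$, $b'\in B$ shows $a\in A^\vee$ and $b\in B^\vee$. So $C$ sits between $A\oplus B$ and $A^\vee\oplus B^\vee$, and under the perfect pairing induced on $(A\oplus B)^\vee/(A\oplus B)$ by the ambient lattice, the subgroup $C^\vee/(A\oplus B)$ is the annihilator of $C/(A\oplus B)$, giving $[(A\oplus B)^\vee:C^\vee]=[C:A\oplus B]$.

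For the first displayed equation I would compute
\[ \lvert\det A\rvert\,\lvert\det B\rvert \;=\; [(A\oplus B)^\vee : A\oplus B] \;=\; [(A\oplus B)^\vee:C^\vee]\cdot[C^\vee:C]\cdot[C:A\oplus B] \;=\; [C:A\oplus B]^2\,\lvert\det C\rvert,\]
which is the claimed identity (signs match as $C$ has the same signature as $A\oplus B$).

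For the divisibility statement, the main trick is to observe that the composition $C\hookrightarrow A^\vee\oplus B^\vee \twoheadrightarrow A^\vee/A$ (the projection modulo $A$) has kernel exactly $A\oplus B$: if $c=a+b\in C$ with $a\in A$, then $b=c-a\in C\cap(B\otimes\QQ)=B$ by primitivity of $B$ in $C$. Hence $[C:A\oplus B]$ divides $\lvert\det A\rvert$, and by symmetry also $\lvert\det B\rvert$. Combining with the first identity gives
\[ [C:A\oplus B]\cdot\lvert\det C\rvert \;=\; \frac{\lvert\det A\rvert\,\lvert\det B\rvert}{[C:A\oplus B]} \;=\; \lvert\det A\rvert\cdot\frac{\lvert\det B\rvert}{[C:A\oplus B]},\]
and the second factor on the right is an integer, proving $\det A \mid [C:A\oplus B]\cdot\det C$.

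Finally, for the $p$-adic statement, I would tensor the exact sequence $0\to A\oplus B\to C\to C/(A\oplus B)\to 0$ of $\ZZ$-modules with $\ZZ_p$ (which is flat). The quotient $C/(A\oplus B)$ is finite of order $[C:A\oplus B]$, so tensoring with $\ZZ_p$ retains only its $p$-primary part. Under the hypothesis $p\nmid[C:A\oplus B]$, that $p$-part vanishes, and the sequence collapses to $C\otimes\ZZ_p=(A\oplus B)\otimes\ZZ_p=(A\otimes\ZZ_p)\oplus(B\otimes\ZZ_p)$. None of these steps is a real obstacle; the only subtle point is the kernel computation in the divisibility step, which crucially uses the primitivity of the orthogonal summand rather than of the one whose determinant we are bounding.
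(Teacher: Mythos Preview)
Your proof is correct in all three parts. The paper itself does not supply a proof of this lemma at all---it merely records it as a ``well known Lemma for later use''---so there is no argument to compare against. Your approach via the chain $A\oplus B\subseteq C\subseteq C^\vee\subseteq A^\vee\oplus B^\vee$ and the annihilator identification is the standard one; the kernel computation for the divisibility claim is handled cleanly, and the flatness argument for the $p$-adic splitting is the expected route.
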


Let $a \in \OG(A), b\in \OG(B), c\in \OG(C)$ be isometries.
We call $(A,a) \oplus (B,b) \subseteq (C,c)$ an equivariant primitive extension if the restriction $c|_{A\oplus B} = a\oplus b$.

\begin{lemma}\label{lem-glue-resultant}
 Let $(A,a) \oplus (B,b) \hookrightarrow (C,c)$ be an equivariant
 primitive extension with characteristic polynomials $p_A,p_B$.
 Then any prime dividing the index $[C:A \oplus B]$ divides the resultant $\res(p_A,p_B)$.
\end{lemma}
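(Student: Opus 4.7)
The plan is to transport the equivariance from $C$ to the glue group $G := C/(A \oplus B)$ and then exploit the discriminant forms to force a common eigenvalue of $a$ and $b$ modulo $p$. First I would recall the standard identification of $G$ with a subgroup of $A^\vee/A \oplus B^\vee/B$, and write $H_A \subseteq A^\vee/A$, $H_B \subseteq B^\vee/B$ for the two projections; the ``glue isomorphism'' $\phi: H_A \to H_B$ is then an abstract isomorphism of abelian groups with graph equal to $G$.

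Since $c \in \OG(C)$ satisfies $c|_{A \oplus B} = a \oplus b$, the isometry $c$ stabilises $A \oplus B$ and hence its saturation $C$; reducing modulo $A \oplus B$ shows that the componentwise action of the induced isometries $\bar a \in \OG(A^\vee/A)$ and $\bar b \in \OG(B^\vee/B)$ preserves $G$. This translates into the intertwining relation
\[
\phi \circ \bar a|_{H_A} \;=\; \bar b|_{H_B} \circ \phi.
\]
Now let $p$ be a prime dividing $[C : A \oplus B] = |G|$. Then the $p$-torsion subgroups $H_A[p]$ and $H_B[p]$ are nonzero $\FF_p$-vector spaces, and the restrictions $\bar a|_{H_A[p]}$ and $\bar b|_{H_B[p]}$ are conjugate over $\FF_p$ via $\phi$; in particular they share the same spectrum in an algebraic closure $\overline{\FF}_p$.

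It remains to connect this spectrum with the characteristic polynomials. By Cayley--Hamilton, $p_A(a)=0$ on $A$, and since $a$ extends to an isometry of $A \otimes \QQ$ stabilising $A^\vee$, we get $p_A(\bar a) = 0$ on $A^\vee/A$; likewise $p_B(\bar b) = 0$ on $B^\vee/B$. Picking any eigenvalue $\lambda \in \overline{\FF}_p$ of $\bar b|_{H_B[p]}$ (which exists because $H_B[p] \neq 0$), the conjugation above supplies the same $\lambda$ as an eigenvalue of $\bar a|_{H_A[p]}$, so $\lambda$ is a common root of the mod-$p$ reductions of $p_A$ and $p_B$. Hence $p \mid \res(p_A,p_B)$.

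The only slightly delicate point is ensuring that the induced map on $A^\vee/A$ really is annihilated by $p_A$ (rather than by some smaller polynomial that could obstruct the argument), and that $\phi$ intertwines the two induced actions; both follow from the fact that $a$ preserves the $\QQ$-structure $A \otimes \QQ \supseteq A^\vee$ and from the equivariance hypothesis $c|_{A\oplus B} = a \oplus b$, so no real obstacle arises.
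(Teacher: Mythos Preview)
Your argument is correct and is essentially the proof of the cited result: the paper's own proof is the single line ``apply \cite[Prop.~4.2]{mcmullen16} to $G=C/(A\oplus B)$'', and what you have written --- passing to the glue, intertwining $\bar a$ and $\bar b$ on the $p$-torsion via $\phi$, and reading off a common root of $p_A$ and $p_B$ modulo $p$ --- is exactly how that proposition is established. So the mathematical content is identical; you have simply made the outsourced step self-contained.
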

\begin{proof}
 Apply \cite[Prop. 4.2]{mcmullen16} to $G = C/(A \oplus B)$.
\end{proof}

\begin{lemma} \label{lem-useful}
 Let $(A,a) \oplus (B,b) \hookrightarrow (C,c)$ be an equivariant
primitive extension.
 Suppose that the characteristic polynomial $p_a$ of $a$ is $\Phi_n(x)$.
 Then the glue $G = C/(A \oplus B)$ satisfies
 \[ |G| \mid \operatorname{res}(\Phi_n,\mu)\]
 where $\mu=\mu_b$ is the minimal polynomial of $b$.
\end{lemma}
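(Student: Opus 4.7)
The plan is to use the rigidity coming from $(A,a)$ being a $\Phi_n$-lattice: via $\zeta_n \mapsto a$ the lattice $A$ becomes a rank-one module over the Dedekind domain $\ZZ[\zeta_n]$, and hence a fractional ideal. I would embed $G$ equivariantly into $A^\vee/A$, identify the latter with a quotient $\ZZ[\zeta_n]/\mathfrak{d}$, turn the condition ``$\mu_b$ annihilates $G$'' into a divisibility of ideals, and finish by taking norms.

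First, the natural projection $C \subseteq C^\vee \hookrightarrow A^\vee \oplus B^\vee \twoheadrightarrow A^\vee$, followed by reduction modulo $A$, induces a $c$-equivariant map $G \to A^\vee/A$ where $c$ acts on $A^\vee/A$ as the unique extension of $a$. Primitivity of $A, B$ in $C$ yields injectivity: if $(a', b') \in C$ has $a' \in A$, then $(0, b') = (a', b') - (a', 0) \in C \cap (B \otimes \QQ) = B$, so $(a', b') \in A \oplus B$. The same argument produces a $c$-equivariant injection $G \hookrightarrow B^\vee/B$ on which $c$ acts as $b$.

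Since $\Phi_n$ is irreducible and $p_a=\Phi_n$, $A \otimes \QQ$ is a one-dimensional $\QQ(\zeta_n)$-space, so $A$ and $A^\vee$ are invertible $\ZZ[\zeta_n]$-modules. Setting $\mathfrak{d} = A \cdot (A^\vee)^{-1} \subseteq \ZZ[\zeta_n]$ and tensoring the exact sequence $0 \to \mathfrak{d} \to \ZZ[\zeta_n] \to \ZZ[\zeta_n]/\mathfrak{d} \to 0$ with the invertible module $A^\vee$ gives a $\ZZ[\zeta_n]$-linear isomorphism $A^\vee/A \cong \ZZ[\zeta_n]/\mathfrak{d}$. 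Under this identification $G$ corresponds to some $\mathfrak{e}/\mathfrak{d}$ with $\mathfrak{d} \subseteq \mathfrak{e} \subseteq \ZZ[\zeta_n]$; its annihilator is $\mathfrak{d}\mathfrak{e}^{-1}$, and multiplicativity of the ideal norm gives $|G| = N(\mathfrak{d}\mathfrak{e}^{-1})$.

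Finally, the second embedding $G \hookrightarrow B^\vee/B$ forces $\mu_b(c)\rvert_G = 0$; since $c$ on $A^\vee/A$ acts as multiplication by $\zeta_n$ under the isomorphism above, this yields $\mu_b(\zeta_n) \in \mathfrak{d}\mathfrak{e}^{-1}$, so $\mathfrak{d}\mathfrak{e}^{-1}$ divides $(\mu_b(\zeta_n))$ in $\ZZ[\zeta_n]$. Taking norms and using that for monic $\Phi_n, \mu_b$ one has $N_{\QQ(\zeta_n)/\QQ}(\mu_b(\zeta_n)) = \prod_{\Phi_n(\zeta)=0} \mu_b(\zeta) = \pm \res(\Phi_n,\mu_b)$ concludes the argument. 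The main subtlety --- and the reason why the coarse resultant bound $\res(p_A,p_B)$ of Lemma~\ref{lem-glue-resultant} is replaced by the sharper $\res(\Phi_n,\mu_b)$ --- is that submodules of $\ZZ[\zeta_n]/\mathfrak{d}$ need not be cyclic as $\ZZ[\zeta_n]$-modules, yet the equality $|G| = N(\mathrm{Ann}_{\ZZ[\zeta_n]}(G))$ still holds thanks to the invertibility of fractional ideals in the Dedekind domain $\ZZ[\zeta_n]$.
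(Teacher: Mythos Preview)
Your proof is correct and follows essentially the same route as the paper: embed $G$ equivariantly into $A^\vee/A$, exploit the $\ZZ[\zeta_n]$-module structure, observe that $\mu_b(\zeta_n)$ lies in the annihilator of $G$, and take ideal norms. The paper's version is terser, asserting directly that the image of $G$ in $A^\vee/A$ is a cyclic $\ZZ[\zeta_n]$-module $\ZZ[\zeta_n]/I$ with $\mu_b(\zeta_n)\in I$, and then concluding $|G|=N(I)\mid N(\mu_b(\zeta_n))$.

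One small correction to your closing commentary: submodules of $\ZZ[\zeta_n]/\mathfrak{d}$ \emph{are} in fact cyclic over $\ZZ[\zeta_n]$, since for ideals $\mathfrak{d}\subseteq\mathfrak{e}$ in a Dedekind domain one has $\mathfrak{e}/\mathfrak{d}\cong \ZZ[\zeta_n]/(\mathfrak{d}\mathfrak{e}^{-1})$ (localize and use that ideals are locally principal, then apply CRT). This is exactly what the paper's proof uses. Your argument is valid either way, but the remark itself is inaccurate.
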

\begin{proof}
Let $G_A$ denote the orthogonal projection of $G$ to $A^\vee/A$
and $\overline{a}$ the automorphism on $G_A$ induced by $a$.
Since $G_A$ is a finite $\Z[\zeta_n]$-module generated
by one element, we have
$G_A = \Z[\zeta_n]/I$ where $I$ is the  kernel of the map $\Z[\zeta_n] \mapsto \End G_A$ that sends the root of unity $\zeta_n$ to
$\overline{a}$. This yields:
\[
\mu(\overline{a}) = 0 \mbox{ thus } \mu(\zeta_n) \in I
\]
and
\[
|G| = |G_A|=|{\mathcal O}_K/I| = N(I) \mid N(\mu(\zeta_n)) = \prod_{(k,n)=1}
\mu(\zeta_n^k) = \mbox{res}(\phi_n,\mu_b)
\]
where $N(I)$ is the norm of the ideal $I$.
\end{proof}


\section{Ruling out the factor \texorpdfstring{$F_{15}$}{F15}} \label{sect-th1}

The main aim of this section is to prove the following proposition.

\begin{proposition} \label{prop}
Let $f$ be an automorphism of an Enriques surface $\EnS$ and let $p_f$ be the minimal polynomial of the map $f^*: \NumS \rightarrow \NumS$.
Then the modulo-$2$ reduction $(p_f(x)  \bmod 2)$ is never divisible by the polynomial
$$
F_{15} =    x^8 + x^7 +       x^5 + x^4 + x^3 +       x + 1
$$
i.e. by the modulo-$2$ reduction  of the cyclotomic polynomial $\Phi_{15}(x)\in \mathbb{Z}[x]$.
\end{proposition}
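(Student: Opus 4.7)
The plan is to assume that $F_{15}$ divides $(p_f \bmod 2)$ and derive a contradiction by a McMullen-style analysis combining the eigenspace decomposition of $f_N$ on $N$, the Hodge structure of the K3 cover $X$, and the $2$-elementary shape of the genus of $N \cong U \oplus U(2) \oplus E_8(-2)$.

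First I would pin down which cyclotomic factors of $p_N$ can contribute $F_{15}$ to the mod-$2$ reduction. Using $\Phi_{2m}(x) = \Phi_m(-x)$ for odd $m > 1$ (so $\Phi_{2m} \equiv \Phi_m \pmod{2}$) and $\Phi_{2m}(x) = \Phi_m(x^2)$ for even $m$, the factor $F_{15}$ arises in $\Phi_n \bmod 2$ exactly when $n = 15 \cdot 2^k$. Since $\deg \Phi_{60} = 16 > 12 = \rank N$, the only candidates are $\Phi_{15}$ and $\Phi_{30}$, and a rank count forces exactly one of them to divide $p_N$. Replacing $\tilde f$ by $\tilde f \circ \varepsilon$ negates $f_N$ and interchanges the two cases via $\Phi_{15}(-x) = \Phi_{30}(x)$, so I may assume $\Phi_{15} \mid p_N$. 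Set $N_{15} := \ker \Phi_{15}(f_N)$: a rank-$8$ primitive sublattice of $N$, and since $\Phi_{15}$ is irreducible of the same degree, $N_{15} \otimes \Q$ is irreducible as a $\Q[f_N]$-module.

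Second, I would split according to the eigenvalue $\lambda$ by which $\tilde f^*$ acts on $H^{2,0}(X) = \C \cdot \omega_X$. Since $\varepsilon$ acts by $-1$ on this line, $\omega_X$ lies in $N \otimes \C$, and distinct $f_N$-eigenspaces whose eigenvalues have product $\neq 1$ are mutually orthogonal. If $\lambda$ is a primitive $15$-th root of unity, then $\omega_X \in N_{15} \otimes \C$ and the irreducibility above forces the transcendental lattice $T_X$ to coincide with $N_{15}$; thus $N_{15}$ is a $\Phi_{15}$-lattice of signature $(2,6)$, and its rank-$4$ orthogonal complement $N_{15}^\perp$ in $N$ is negative definite, sits inside $\NS(X) \cap N$, and so contains no $(-2)$-vectors by~\eqref{eq-no-roots-in-N}. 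If instead $\lambda$ is not a primitive $15$-th root of unity, $N_{15}$ is orthogonal to both $\omega_X$ and $\bar\omega_X$, making $N_{15}$ itself a negative definite $\Phi_{15}$-sublattice of $\NS(X) \cap N$ of rank $8$ without $(-2)$-vectors.

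Third, I would close the argument by lattice arithmetic. By~\eqref{eq-principal} every $\Phi_{15}$-lattice is a twist of the principal rank-$8$ lattice, whose determinant is $|\Phi_{15}(1)\Phi_{15}(-1)| = 1$ by~\eqref{eq-princ-det}. Using the Conway--Sloane genus symbols, Lemmas~\ref{lem-directsum}--\ref{lem-gluesymbol}, and the classification of $\Phi_n$-lattice genera in~\cite[Satz~2.57]{hoeppner:dissertaion}, I would enumerate the finite list of twists realising the required signatures and admitting primitive embeddings into the $2$-elementary lattice $N$ (of discriminant $2^{10}$). In the first sub-case, Lemma~\ref{lem-useful} bounds the glue $N/(N_{15} \oplus N_{15}^\perp)$ by $\res(\Phi_{15}, \mu_{N_{15}^\perp})$; combined with the determinant of $N$ this pins down the genus of the rank-$4$ complement, and a short-vector enumeration (Remark~\ref{remark-computer-genus}) produces a $(-2)$-vector in every candidate, contradicting~\eqref{eq-no-roots-in-N}. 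The second sub-case is excluded by the analogous short-vector search on the negative definite rank-$8$ twists. The hard part will be precisely this finite enumeration: one must carefully trace the interaction between the highly structured $\Phi_{15}$-lattice genera and the $2$-elementary discriminant form of $N$ and verify that no admissible equivariant gluing avoids producing a root.
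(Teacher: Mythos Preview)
Your Hodge-theoretic split into $\omega_X \in N_{15}\otimes\C$ (so $N_{15}$ has signature $(2,6)$) versus $\omega_X \notin N_{15}\otimes\C$ (so $N_{15}$ is negative definite of signature $(0,8)$) is natural and lines up with the paper's glue-nontrivial/trivial dichotomy. But your endgame in the second sub-case cannot work. There the candidate for $N_{15}$ is $E_8(-2)$ (this is precisely what the paper obtains in its ``$G$ trivial'' case), and $E_8(-2)$ contains no $(-2)$-vectors since all its norms lie in $4\Z$; a short-vector search inside $N_{15}$ therefore yields no contradiction with~\eqref{eq-no-roots-in-N}. The paper's argument at this point is substantially deeper and has to leave $N$ altogether: from $N_{15}\cong E_8(-2)$ and $N_{15}^\perp\cong U\oplus U(2)$ it invokes \cite[Lemma~7.7]{oguiso-yu} to conclude that $f$ has trivial entropy, so $p_M$ is a product of cyclotomic polynomials; after a further iteration $p_M=\Phi_{15}\Phi_1^2$, and the primitive closure $H_{15}$ of $M_{15}\oplus N_{15}$ inside the full $H^2(X,\Z)$ is shown to be even, unimodular, negative definite of rank~$16$, hence $E_8(-1)^{2}$ or $\Gamma_{16}$. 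It is a root of $H_{15}$, summed over an $\tilde f^{*}$-orbit and paired with an ample class, that produces the contradiction---not~\eqref{eq-no-roots-in-N}.

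A secondary gap: your reduction via $\tilde f\mapsto\tilde f\circ\varepsilon$ only swaps $\Phi_{30}$ for $\Phi_{15}$, whereas the paper replaces $\tilde f$ by a power coprime to $15$, forcing the minimal polynomial of $f_N$ on $N_{15}^\perp$ to divide $\Phi_1\Phi_3\Phi_5$. This is what makes the resultant bound of Lemma~\ref{lem-useful} tight; in the signature-$(2,6)$ case the paper then rules out the one surviving genus by a $2$-adic incompatibility of $N_3$ inside $N_{15}^\perp$ via Lemma~\ref{lemma:2adic_symbol_N3}, not by a root search in $N_{15}^\perp$ whose success you have asserted but not verified.
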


Recall (see e.g. \cite{cantat})  that $p_f$ is a product of cyclotomic polynomials and at most one Salem factor.
Since $p_f$ is reciprocal, $(p_f(x)  \bmod 2)$ is divisible by an irreducible factor of $F_{15}$ if and only if it is divisible by the whole $F_{15}$ (c.f. \cite{hor}).   \\

\noindent
{\bf Proof of Prop.~\ref{prop}}
Assume that $F_{15} \divides    (p_f \bmod 2)$. Combined with \cite[Remark~2.4]{hor}, 
this implies that
\begin{equation}\label{eq-F15}
(p_M \bmod 2) = F_{15} \cdot  F_{1}^2   \quad \mbox{ and }  \quad (F_{15} \cdot F_{1}^2)  \divides    (p_N \bmod 2) .
\end{equation}

By \cite[Lemma 2.1]{hor} and \cite[Lemma 2.5]{hor} the charateristic polynomial $p_N$ is  a product of cyclotomic polynomials of degree at most $8$. Computing modulo-$2$ reductions of all
such cyclotomic polynomials, one infers that
either $\Phi_{15} \divides p_N$ or  $\Phi_{30} \divides p_N$.
Replacing $\tilde{f}$ by a power coprime to  $15$ we
can assume that
$p_N$ is a product of the $\Phi_k$ for $k \in \{1,3,5,15\}$.
Together with (\ref{eq-F15}) this leaves us with the two possibilities
\begin{equation}
 p_N = \Phi_{15} \cdot \Phi_{1}^4 \qquad \mbox{or} \qquad p_N = \Phi_{15} \cdot \Phi_{3} \cdot\Phi_{1}^2.
\end{equation}

We consider the (primitive) $f_N$-invariant sublattice
$N_{15}$ (see \eqref{eq-nk-def}) 
and denote its orthogonal complement in $N$ by $N_{15}^{\perp}$.
Since $\Phi_{15}(x)$ has no real roots, the signature of $N_{15}$ is of the form $(2k,2(4-k))$ with $k \in\{1,2,3,4\}$.
Recall that $N$ is of signature $(2,10)$ and contains
$N_{15}$. Thus the signature of $N_{15}$ is either
$(0,8)$ or $(2,6)$.

By definition
$$
N_{15} \oplus N_{15}^{\perp} \subset N
$$
is a primitive extension. Let $G=N/(N_{15} \oplus N_{15}^{\perp})$ be the glue between $N_{15}$ and $N_{15}^{\perp}$.
Then by Lemma \ref{lem-useful} we have

\[
|G| \divides   \mbox{res}(\Phi_{15}, \mu_{f|_{N_{15}^{\perp}}})
\]
But we have
\begin{equation} \label{eq-resultantcyclo}
\mbox{res}(\Phi_{15}, \Phi_1) = 1 \mbox{ and } \mbox{res}(\Phi_{15}, \Phi_3) = 25 \, .
\end{equation}
In particular, if $|G| > 1$ then
\begin{equation} \label{eq-phi-manyfact3}
p_N = \Phi_{15} \cdot \Phi_{3}  \cdot \Phi_{1} ^2.
\end{equation}
In what follows we treat the cases whether $G$ is trivial or not separately.\\
\subsection*{The case when \texorpdfstring{$\mathbf{G}$}{G} is trivial.}
Assume that the glue $G$ is trivial, i.e.
\begin{equation} \label{eq-N-direct-sum}
N_{15} \oplus N_{15}^{\perp} = N \in \even_{(2,10)}2^{10}.
\end{equation}
Let $(\epsilon_q,n_q)$ be the $2$-adic genus symbol of $N_{15}$ and
$(\epsilon'_q,n'_q)$ the symbol of $N_{15}^\perp$.
From Lemma \ref{lem-directsum} we infer that $10 = n_2 + n'_2$.
Further $n'_2 \leq \rank N_{15}^\perp = 4$ and $n_2 \leq \rank N_{15} = 8$. Thus we obtain $6 \leq n_2 \leq 8$.
Since $N_{15}$ is a $\Phi_{15}$-lattice, we can calculate all $\Phi_{15}$-lattices matching this condition. There is exactly one such lattice up to isometry:

\begin{equation} \label{eq-oguiso-1}
 N_{15} \cong E_8(-2) \in \mbox{II}_{(0,8)}2^{8} .
\end{equation}
Using Lemma \ref{lem-directsum} once more, we calculate the genus symbol of $N_{15}^\perp$ from those of $N$ and $N_{15}$ and see that

\begin{equation} \label{eq-oguiso-2}
N_{15}^{\perp} \cong U \oplus U(2) \in \mbox{II}_{(2,2)}2^{2}
\end{equation} is the unique class in its genus.
From \eqref{eq-oguiso-1}, \eqref{eq-oguiso-2} and \cite[Lemma~7.7]{oguiso-yu} we infer that the spectral radius of $\tdfm$ is one (i.e. $f$ has trivial entropy).
Thus $p_M$ is not divisible by a Salem polynomial and
must be a product of cyclotomic polynomials.
A direct computation of modulo-$2$ reductions of all cyclotomic polynomials of degree at most $8$
shows that either $\Phi_{30}$ or $\Phi_{15}$ divides  $p_M$. By replacing $\tilde{f}$ with its iteration (i.e. by $\tilde{f}^2$ or $\tilde{f}^4$) we can assume that
$$
p_M = \Phi_{15} \cdot \Phi_{1}^2 \, .
$$

We consider the rank $2$ lattice $M_{1}$ and the rank $8$ lattice $M_{15}$ (see \eqref{eq-nk-def}).
Because  $\Phi_{15}$ has no real roots, $M_{15}$ has signature $(2k, 2(4-k))$. But
$M$ is of signature $(1,9)$, so  the lattice $M_{15}$
is negative-definite.
Since the resultant  $\res(\Phi_{15},\Phi_{1})$ is trival, there is no glue between $M_1$ and $M_{15}$ which leaves us with $M_{15} \cong E_8(-2)$ and $M_1\cong U(2)$.
We observe that
\[H_{15}:=\ker \Phi_{15}(\tilde{f}^*) = \overline{M_{15} \oplus N_{15}}\]
is the primitive closure of $M_{15} \oplus N_{15}$ in  $H^2(\KtS,\ZZ)$.
Since the resultant $\mbox{res}(\Phi_{15}, \Phi_{1}\Phi_{3})=25$ is odd,
there is no glue over $2$ between $H_{15}$ and $H_{15}^\perp$. As further $\disc (M_{15} \oplus N_{15})$ and hence $\disc(H_{15})$ is not divisible by $3$, there is no glue above $3$ either. Thus $H_{15}$ is a direct summand of the unimodular lattice $H^2(\KtS,\ZZ)$. In particular, it
is an  even negative-definite, unimodular  lattice of rank $16$.
Such a lattice is either the direct sum of two copies of $E_8(-1)$ or it is the
even negative-definite, unimodular lattice  $\Gamma_{16}$ whose root sublattice is $D_{16}(-1)$ (see e.g. \cite[Table~1]{conway-sloane-88}).
Each of those lattices has roots, so we can find a root in $\NS(\KtS) \supseteq H_{15}$. By Riemann-Roch such a root defines an effective divisor $C \in \NS(\KtS)$ such that
$$
C + \tilde{f}^*C + \ldots + (\tilde{f^*})^{14}C = 0 \in H^2(\KtS,\ZZ)
$$
and we arrive at  a contradiction because $\NS(\KtS)$ contains an ample class 
(c.f.  \cite[\S 2]{mcmullen16}). Hence the glue $G$ cannot be trivial.
\subsection*{The case when G is non trivial.}
Assume that the glue $G$ is non-trivial. In particular,  the characteristic polynomial $p_N$ satisfies  \eqref{eq-phi-manyfact3}. \\
By Lemma~\ref{lem-useful} and \eqref{eq-resultantcyclo} we have either $|G|=5$ or $|G|=25$. Thus Lemma \ref{lem-det-index-formula} with \cite[Prop. 5.1]{BF} implies that $|G|=25$ and
$$
\disc(N_{15}) \cdot \disc (N_{15}^{\perp}) = \disc (N)  \cdot |G|^2 = 2^{10} \cdot 5^4
$$
Observe that $\disc(N_{15})=2^{8k}r$ for some $k,r \in \N$, so we have
\begin{equation} \label{eq-discrn15}
\disc(N_{15}) = 2^{8} \cdot 5^2.
\end{equation}
One computes that the genus of a $\Phi_{15}$-lattice with this determinant and signature either $(2,6)$ or $(0,8)$ is unique. It is given by
\[N_{15} \in \even_{(2,6)}2^8 5^{-2} \mbox{ and thus } N_{15}^\perp \in \even_{(0,4)}2^2 5^{-2}\]
(using Lemmas \ref{lem-directsum} and \ref{lem-gluesymbol}).
Since $\res(\Phi_3,\Phi_1)=3$ is odd, we know that
\[N_{15}^\perp\otimes \ZZ_2 = (N_1 \otimes \ZZ_2) \oplus (N_3 \otimes \ZZ_2).\]
The rank of $N_3$ is $2$, so by Lemma \ref{lemma:2adic_symbol_N3} the $2$-adic symbol of $N_3$ is $q^{-2}$ for $q=2^i$.
The $2$-adic symbol of $N_{15}^\perp$ is $1^2 2^2$.
By Lemma \ref{lem-directsum}, $i \leq 1$, and if $i=0,1$, then the sign is wrong. Hence $N_3\otimes \ZZ_2$ cannot be a direct summand of $N_{15}^\perp\otimes \ZZ_2$ which is a contradiction.
\hfill $\Box$

\section{The factor \texorpdfstring{$F_9$}{F9}} \label{sec-F9}

In this section we maintain the notation of  previous sections and prove Theorems~\ref{main},~\ref{thm-orders}. 
We assume that $f \in \Aut(\EnS)$ satisfies the condition
\begin{equation} \label{eq-f9-div}
F_{9} \divides    (p_f \bmod 2) \, .
\end{equation}
After replacing $\tilde{f}$ by some power co-prime to $3$ we may assume that $f_N$ is of order $9$.
Since $F_9F_1^2$ divides $p_N$, we can rule out $p_N = \Phi_9^2$. This leaves us with the three possibilities
\begin{equation} \label{eq-f9-possibilities}
p_N = \Phi_9 \Phi_3^{k}\Phi_1^{6-2k} \qquad k \in \{0,1,2\}.
\end{equation}
As usual we set $N_9 := \ker(\Phi_9(f_N))$ and denote by $N_9^\perp$ the orthogonal complement of $N_9$ in $N \in \even_{(2,10)}2^{10}$.
By Lemma \ref{lem-useful} $\det N_9 \mid 2^6\res(\Phi_9,\Phi_3\Phi_1)=2^6 \cdot3^3$.
Using the description of $N_9$ as $\Phi_9$-lattice, we enumerate the possibilities for $N_9$. This yields $4$ cases and with Lemmas \ref{lem-directsum} and \ref{lem-gluesymbol} we calculate the corresponding genus of $N_9^\perp$.
\begin{equation}\label{N9-1}
N_9 \in \even_{(0,6)}2^{-6}3^{1} \mbox{ and } N_9^\perp \in \even_{(2,4)}2^{-4}3^{-1}
\end{equation}
\begin{equation}\label{N9-2}
N_9 \in \even_{(0,6)}2^{-6}3^{-3} \mbox{ and } N_9^\perp \in \even_{(2,4)}2^{-4}3^{3}
\end{equation}
\begin{equation}\label{N9-3}
N_9 \in \even_{(2,4)}2^{-6}3^{-1} \mbox{ and } N_9^\perp \in \even_{(0,6)}2^{-4}3^{1}
\end{equation}
\begin{equation}\label{N9-4}
N_9 \in \even_{(2,4)}2^{-6}3^{3} \mbox{ and } N_9^\perp \in \even_{(0,6)}2^{-4}3^{-3}
\end{equation}
We can rule out the cases (\ref{N9-3}) and (\ref{N9-4}) since in each case the genus of $N_9^\perp$ consists of a single class
(see Remark~\ref{remark-computer-genus}) which contains roots.
We continue by determining the characteristic polynomial.

\begin{lemma}
 Let $g \in \OG(N)$ be an isometry of order $9$,
 then the characteristic polynomial $p_N$ of $g$ is not of the form
\[p_N = \Phi_9 \Phi_3^{2}\Phi_1^{2}.\]
\end{lemma}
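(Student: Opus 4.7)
The strategy is a 2-adic genus-theoretic argument. Assume for contradiction that $p_N = \Phi_9 \Phi_3^2 \Phi_1^2$, and set $N_k := \ker \Phi_k(g|_N)$, so $\rank N_9 = 6$, $\rank N_3 = 4$, and $\rank N_1 = 2$. The first step is to split $N \otimes \Z_2$ orthogonally: by Lemma \ref{lem-glue-resultant}, the glue of the primitive extension $N_9 \oplus N_3 \oplus N_1 \subseteq N$ has order dividing the pairwise resultants $\res(\Phi_i, \Phi_j)$ for $i,j \in \{1,3,9\}$, which are all powers of $3$. Hence $N \otimes \Z_2 = N_9 \otimes \Z_2 \oplus N_3 \otimes \Z_2 \oplus N_1 \otimes \Z_2$. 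Since $N = U \oplus U(2) \oplus E_8(-2)$ has 2-adic symbol $1^{+2} 2^{+10}$, and since cases (\ref{N9-1})--(\ref{N9-4}) show $N_9 \otimes \Z_2 = 2^{-6}$ uniformly, Lemma \ref{lem-directsum} then forces $N_9^\perp \otimes \Z_2 = (N_3 \oplus N_1) \otimes \Z_2 = 1^{+2} 2^{-4}$.

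Next I would constrain the two summands individually. By Lemma \ref{lemma:2adic_symbol_N3} applied to $N_3$ (on which $g$ has minimal polynomial $\Phi_3$), each 2-adic Jordan constituent has even rank $n_i$ with sign $(-1)^{n_i/2}$; since $\rank N_3 = 4$, its 2-adic symbol is either $q^{+4}$ at a single scale $q = 2^i$, or $q^{-2} q'^{-2}$ at two distinct scales. The summand $N_1 \otimes \Z_2$ inherits complete evenness from $N_9^\perp \otimes \Z_2$ (a Jordan constituent of a 2-adic orthogonal summand is even whenever the ambient one is), and being of rank $2$ it must take the form $q''^{\epsilon 2}$ for some scale $q''$ and sign $\epsilon$.

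The final step is an enumeration against the target $1^{+2} 2^{-4}$, which has rank $2$ at scale $1$ with sign $+1$, rank $4$ at scale $2$ with sign $-1$, and no rank elsewhere. Every single-constituent choice $N_3 = q^{+4}$ fails: $q = 1$ overshoots the scale-1 rank; $q = 2$ contributes sign $+1$ at scale 2, and no placement of $N_1$ simultaneously fixes both the rank and sign conditions there; and $q \geq 4$ places rank at a forbidden scale. The only feasible two-constituent option, $N_3 = 1^{-2} 2^{-2}$, forces $N_1 = 2^{\epsilon 2}$ by rank counting, but then the scale-1 sign of $N_3 \oplus N_1$ is $-1$, contradicting the target $+1$. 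I expect the main obstacle to be the careful bookkeeping in this last step: the impossibility does not come from a single numerical mismatch but from checking that every scale-and-sign assignment violates at least one local invariant of $1^{+2} 2^{-4}$.
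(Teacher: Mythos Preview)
Your argument is correct and follows essentially the same route as the paper: split $N\otimes\Z_2$ using that the relevant resultants are powers of $3$, pin down the $2$-adic symbol of $N_9^\perp$ as $1^{+2}2^{-4}$, constrain $N_3\otimes\Z_2$ via Lemma~\ref{lemma:2adic_symbol_N3}, and observe that no admissible symbol for $N_3$ is compatible. The only cosmetic difference is that the paper checks directly that none of the three candidates $1^{+4},\,1^{-2}2^{-2},\,2^{+4}$ for $N_3$ can occur as an orthogonal summand of $1^{+2}2^{-4}$, whereas you carry $N_1$ along in the enumeration; the two verifications are equivalent.
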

\begin{proof}
Suppose that $p_N = \Phi_9 \Phi_3^2 \Phi_1^2$.
Recall that
by Lemmas \ref{lem-det-index-formula} and \ref{lem-glue-resultant}.
\[N \otimes \ZZ_2 = (N_9 \otimes \ZZ_2) \oplus (N_3\otimes \ZZ_2) \oplus (N_1\otimes \ZZ_2).\]
We see that $N_3\otimes \ZZ_2$ is of rank $4$ and has maximal scale of a $2$-adic Jordan component equal to $2$.
By Lemma \ref{lemma:2adic_symbol_N3} the possible $2$-adic symbols
of $N_3$ are $1^{4}$, $1^{-2} 2^{-2}$ and $2^{4}$. \\
In all cases \eqref{N9-1} - \eqref{N9-4} the $2$-adic symbol of $N_9^\perp$  is $1^2 2^{-4}$.
Therefore $N_3\otimes \ZZ_2$ cannot be a direct summand of $N_9^\perp\otimes \ZZ_2$. Indeed, in the first case $1^{4}$ the unimodular part is too big.
In the second case $1^{-2} 2^{-2}$ the unimodular part has the wrong determinant, and finally in the last case $2^{4}$ the $2$-modular part has wrong determinant. This contradiction completes the proof.
\end{proof}
If $p_N=\Phi_9 \Phi_1^6$, then we must be in case \eqref{N9-1} and $N_9^\perp = N_1$. Since the signature of $N_1$ is $(2,4)$, it contains the transcendental lattice. In particular, $f$ is semi-symplectic. Choosing the covering K3 surface general enough, we may assume that $N_1$ is its transcendental lattice. This situation is analyzed in the next
\begin{proposition}\label{prop:F9rho16}
 Let $\EnS$ be an Enriques surface such that its covering K3 surface $\KtS$ has transcendental lattice
 \[T(X) \cong U \oplus U(2) \oplus A_2(-2) \in\even_{(2,4)}2^{-4}3^{-1}\]
and satisfies the condition
 \[N \cap \NS(\KtS) \cong E_6(-2) \in \even_{(0,6)}2^{-6}3^{1}.\] 
Then the image of $\Aut_{s}(\EnS)\rightarrow \OG(\NumS) \otimes \FF_2$ generates a group isomorphic to $\mathcal{S}_5$.
\end{proposition}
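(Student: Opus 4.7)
The plan is to parametrize $\Aut_s(Y)$ via the global Torelli theorem for the K3 cover $X$, to cut this parametrization down by Nikulin's gluing formalism, and finally to pin down the mod-$2$ image using the Borcherds-type algorithm of Section~6.

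By the global Torelli theorem combined with the Enriques construction, a semi-symplectic $f\in\Aut_s(Y)$ corresponds to an isometry $g\in\OG(H^2(X,\Z))$ commuting with $\varepsilon^\ast$, preserving the Hodge decomposition and the ample cone of $X$, and satisfying $g|_{T(X)}=\pm 1$. Since $g$ commutes with $\varepsilon^\ast$ it preserves both $M$ and $N$; under our hypotheses $T(X)\subseteq N$ and $N\cap\NS(X)\cong E_6(-2)$, so $g|_N$ is determined by a pair $(h,\pm 1)\in\OG(E_6(-2))\times\{\pm 1\}$. Nikulin's gluing theory then describes when such a pair extends to an isometry of the unimodular lattice $H^2(X,\Z)$: the $3$-primary part of the glue (carried by $E_6^\vee/E_6\cong\Z/3$ and the $A_2(-2)$-summand of $T(X)$) couples the sign on $T(X)$ with the image of $h$ under $\OG(E_6)\twoheadrightarrow\OG(E_6)/W(E_6)\cong\{\pm 1\}$, while the $2$-primary part determines the induced action on $M$ (and hence on $\Num(Y)$ via $\pi^\ast$). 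Reduction modulo $2$ erases the sign ambiguity on $T(X)$, so the image in $\OG(\Num(Y)\otimes\FF_2)$ a priori lies in the image of $W(E_6)$ under the natural composition $W(E_6)\to\OG(\Num Y)\to\OG(\Num Y\otimes\FF_2)$.

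To cut this upper bound down to $\mathcal{S}_5$, I would impose the ample-cone condition. By \eqref{eq-no-roots-in-N}, $E_6(-2)$ contains no $(-2)$-vectors, so every $(-2)$-curve of $X$ lies in $M\cap\NS(X)=\pi^\ast\Num(Y)$ and the ample-cone condition for $\tilde f$ translates into preservation of the Enriques ample cone of $Y$. Running the Borcherds-type algorithm of Section~6 on the lattice $\Num(Y)$ with its Enriques Weyl group returns a fundamental nef chamber with finite stabilizer; the proposition asserts that the image of this stabilizer in $\OG(\Num(Y)\otimes\FF_2)$ is the desired $\mathcal{S}_5$, recognised most naturally as the copy $W(A_4)\cong\mathcal{S}_5$ sitting inside $W(E_6)$ (e.g.\ as the stabilizer of an $A_4$-sub-root-system).

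The main obstacle is this final identification. A priori the gluing conditions of the second paragraph permit the full image $W(E_6)\cong\OG^-(6,\FF_2)$ to appear in the mod-$2$ image; the ample-cone constraint is what cuts this image down to $\mathcal{S}_5$. Verifying this reduction precisely, and showing conversely that every element of the resulting $\mathcal{S}_5$ is realised by an actual semi-symplectic automorphism of $Y$, is done by the computer-assisted enumeration of chambers and their stabilizers via the algorithmic methods of Section~6.
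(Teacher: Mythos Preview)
Your proposal and the paper's proof agree on the only step that actually establishes the result: both invoke the Borcherds-type algorithm of Section~6 to compute the image of $\Aut_s(\EnS)\to\OG(\NumS)$ explicitly, then reduce modulo~$2$ and identify the resulting group (the paper does this via $64$ explicit matrices and a GAP check, with no conceptual scaffolding whatsoever). So at the level of the proof itself, your approach is the paper's approach.

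Where you differ is in the surrounding narrative, and part of that narrative is incorrect. Your claim that ``every $(-2)$-curve of $X$ lies in $M\cap\NS(X)=\pi^\ast\Num(Y)$'' does not follow from the absence of $(-2)$-vectors in $E_6(-2)$. Here $M\oplus E_6(-2)$ sits in $\NS(X)$ with index $2^{6}$ (compare determinants: $\det M\cdot\det E_6(-2)=2^{16}\cdot 3$ while $\det\NS(X)=2^{4}\cdot 3$), so $\NS(X)$ has many classes not lying in either summand, and these can and do have square $-2$. Consequently the ample-cone condition on $X$ is \emph{not} simply the Enriques ample-cone condition on $Y$; the algorithm of Section~6 handles this by working with the full embedding $\pi^\ast\NumS\hookrightarrow\NS(X)$ and the $(-2)$-walls there. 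Your upper-bound discussion via $W(E_6)$ is therefore heuristic at best: it does not give an independent a~priori bound, and the actual cut-down to $\mathcal{S}_5$ is performed entirely by the machine computation, exactly as in the paper.
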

\begin{proof}
The image of $\Aut_{s}(\EnS)\rightarrow \OG(\NumS)$ can be calculated with Algorithm~\ref{algo:main}.
 It is generated by $64$ explicit matrices (see \cite{shimada-comp-BRS}). 
 Their mod $2$ reductions generate a group isomorphic to $\mathcal{S}_5$. The latter can be checked with help of \cite{gap}.
\end{proof}

Since $\mathcal{S}_5$ does not contain an element of order $9$, we are left with
\[p_N= \Phi_9 \Phi_3 \Phi_1^4.\]
We derive further restrictions.
\begin{lemma} \label{lem:N3-A2}
 Let $g \in \OG(N)$ be an isometry with characteristic polynomial $$p_N= \Phi_9 \Phi_3^{1}\Phi_1^{4}.$$
 Then $N_3 = A_2(n)$ with $n \in \{\pm 2, \pm 6\}$.
\end{lemma}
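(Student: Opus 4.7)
The proof will combine the structure theorem for $\Phi_3$-lattices with $2$-adic and determinant bounds obtained from the equivariant gluing framework of Section~2.

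First, I will reduce to the two remaining genus cases \eqref{N9-1} and \eqref{N9-2} for $N_9^\perp$, using the previously-established elimination of \eqref{N9-3}, \eqref{N9-4} and of $p_N = \Phi_9\Phi_3^2\Phi_1^2$. A short computation of the $2$-adic unit class of $\det N_9^\perp \in \{48,\,432\}$ shows that in both remaining cases the $2$-adic symbol of $N_9^\perp$ is $1^{+2}2^{-4}$. By \eqref{eq-principal} together with the fact that $\Z[\zeta_3 + \zeta_3^{-1}] = \Z$, the rank-$2$ $\Phi_3$-lattice $N_3$ is a twist $A_2(n)$ of the principal $\Phi_3$-lattice $A_2$ for some nonzero integer $n$, so $\det N_3 = 3n^2$.

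Next, applied to the equivariant primitive extension $N_3 \oplus N_1 \subseteq N_9^\perp$, Lemma~\ref{lem-useful} gives that the glue $G$ satisfies $|G| \mid \res(\Phi_3, \Phi_1) = 3$, which is odd; then Lemma~\ref{lem-glue-resultant} yields the $2$-adic splitting
\[
N_9^\perp \otimes \Z_2 = (N_3 \otimes \Z_2) \oplus (N_1 \otimes \Z_2).
\]
By Lemma~\ref{lemma:2adic_symbol_N3} the symbol of $N_3 \otimes \Z_2$ is $q^{-2}$ with $q = 2^i$, and since $N_9^\perp \otimes \Z_2$ has Jordan scales only in $\{1,2\}$, we must have $q \in \{1,2\}$. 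The case $q=1$ will be ruled out: it would force $N_1 \otimes \Z_2$ to have trivial unimodular part, and Lemma~\ref{lem-directsum} then forces the unimodular sign of $N_9^\perp \otimes \Z_2$ to equal $-1$, contradicting the symbol $1^{+2}2^{-4}$. Hence $q = 2$, so $N_3 \otimes \Z_2 = 2^{-2}$, which forces $n$ to be twice an odd integer.

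Finally, Lemma~\ref{lem-det-index-formula} gives $3n^2 = \det N_3 \mid |G|\cdot \det N_9^\perp$. Using $|G| \le 3$ and the two possible values of $\det N_9^\perp$, one obtains $n^2 \mid 48$ in case \eqref{N9-1} and $n^2 \mid 432$ in case \eqref{N9-2}; combining with $n$ being twice an odd integer leaves exactly $|n| \in \{2,6\}$ (in case \eqref{N9-1} only $|n|=2$ survives), yielding $n \in \{\pm 2, \pm 6\}$. The main technical step, I expect, is the $2$-adic sign analysis ruling out $q=1$; the rest is essentially divisibility bookkeeping using the lemmas already collected in the preliminaries.
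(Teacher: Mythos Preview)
Your argument follows essentially the same strategy as the paper's: identify $N_3$ as a twist $A_2(n)$ of the principal $\Phi_3$-lattice, bound $|n|$ by a determinant/resultant estimate, and use Lemma~\ref{lemma:2adic_symbol_N3} together with the $2$-adic symbol $1^{+2}2^{-4}$ of $N_9^\perp$ to force the $2$-adic symbol of $N_3$ to be $2^{-2}$, ruling out odd $|n|$. The only difference is cosmetic: you bound $\det N_3$ via the extension $N_3\oplus N_1\subseteq N_9^\perp$ and $\res(\Phi_3,\Phi_1)=3$, whereas the paper works directly with $N_3\subseteq N$ and $\res(\Phi_3,\Phi_9\Phi_1)=27$; both routes yield $n^2\mid 36$ once the $2$-adic constraint is in place.

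There is one small logical slip. The lemma is stated for an arbitrary isometry $g\in\OG(N)$, so you are not entitled to invoke the elimination of cases \eqref{N9-3} and \eqref{N9-4}: that elimination used the Enriques condition \eqref{eq-no-roots-in-N}, which is not part of the hypotheses here. Fortunately this is harmless. The paper already recorded (in the proof of the preceding lemma) that the $2$-adic symbol of $N_9^\perp$ is $1^{+2}2^{-4}$ in \emph{all four} cases \eqref{N9-1}--\eqref{N9-4}, and one checks directly that $|\det N_9^\perp|\in\{48,432\}$ in all four as well; your argument then goes through verbatim without the reduction. (The appeal to the elimination of $p_N=\Phi_9\Phi_3^2\Phi_1^2$ is simply superfluous, since the hypothesis already fixes $p_N$.)
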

\begin{proof}
One can easily see that $A_2$ is the principal $\Phi_3$-lattice.
By \eqref{eq-principal} $N_3=A_2(n)$ for some $n \in \ZZ$.
In the following we show that  $n \in \{\pm 2, \pm 6\}$ by bounding the determinant of $N_3$.
By Lemma~\ref{lem-useful} we have
\[\det N_3 \mid 2^2\res(\Phi_3,\Phi_9 \Phi_1)=2^2 3^3.\]
By Lemma \ref{lemma:2adic_symbol_N3} the $2$-adic symbol of $N_3$ is either $1^{-2}$ or $2^{-2}$. The first one is not a direct summand of $N_9^\perp \otimes \ZZ_2$ (see Lemma \ref{lem-directsum}), so we are left with the second. Hence $|n| \neq 1$.
\end{proof}

\begin{lemma}
 Let $f \in \Aut(\EnS)$ be an automorphism of an Enriques surface such that
 $p_N= \Phi_9 \Phi_3^{1}\Phi_1^{4}$ and (\ref{N9-1}) holds.
 Then $N_3 \cong A_2(-2)$ and $N_1\cong U(2) \oplus U$.
\end{lemma}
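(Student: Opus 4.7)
By Lemma~\ref{lem:N3-A2} we have $N_3 \cong A_2(n)$ for some $n \in \{\pm 2, \pm 6\}$, and the plan is to eliminate $n \in \{\pm 6, 2\}$ and then identify $N_1$ from its genus. Let $G_{31} = N_9^\perp/(N_3 \oplus N_1)$ denote the glue; by Lemma~\ref{lem-glue-resultant}, $|G_{31}|$ divides $\res(\Phi_3,\Phi_1) = 3$.

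First I would rule out $n = \pm 6$ by a discriminant-group obstruction. Here $\det N_3 = 108$, so Lemma~\ref{lem-det-index-formula} combined with $\det N_9^\perp = 48$ leaves only the numerical possibility $|G_{31}| = 3$ and $\det N_1 = 4$. But then the glue of order $3$ must embed into $N_1^\vee/N_1$, a group of order $4$ with no $3$-torsion, a contradiction.

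Next I would eliminate $n = 2$. Here $A_2(2)$ has signature $(2,0)$, so $N_1$ is negative definite of signature $(0,4)$, and a comparison of the $3$-adic Jordan symbols of $A_2(2)$ and $N_9^\perp$ via Lemma~\ref{lem-directsum} forces $|G_{31}| = 3$, whence $\det N_1 = 36$. Since the positive $2$-plane of $N \otimes \R$ must then lie in $N_3 \otimes \R$, the holomorphic two-form $\omega_\KtS$ lies in $N_3 \otimes \C$; consequently $N_9 \oplus N_1$ is orthogonal to $H^{2,0}(\KtS)$ and is contained in $\NS(\KtS) \cap N$. By condition~\eqref{eq-no-roots-in-N}, the primitive closure of $N_9 \oplus N_1$ in $N$ is then a rank-$10$ even negative-definite lattice containing no vector of square $-2$. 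The hard part will be to verify, via a computer-assisted enumeration of the relevant genera as in Remark~\ref{remark-computer-genus}, that no such lattice exists, yielding the desired contradiction.

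Finally, for $n = -2$ the $3$-adic symbols of $A_2(-2)$ and $N_9^\perp$ are compatible with trivial glue, so $|G_{31}| = 1$ and $\det N_1 = 4$. Applying Lemma~\ref{lem-directsum} at the prime $2$ with $N_3 \otimes \ZZ_2 \cong 2^{-2}$ (as established in the proof of Lemma~\ref{lem:N3-A2}) determines the $2$-adic symbol of $N_1$ to be $1^{+2} 2^{+2}$. Together with signature $(2,2)$, trivial $3$-adic part, and evenness, this places $N_1$ in the genus $\even_{(2,2)} 2^{+2}$. By the mass formula this genus consists of a single class, and $U(2) \oplus U$ is its representative, completing the proof.
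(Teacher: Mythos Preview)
Your treatment of $n=\pm 6$ is correct and amounts to the same determinant bound the paper states as $\det N_3 \mid 2^2\cdot 9$.

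For $n=2$ your route works but is heavier than necessary. Once you have observed that $N_1 \subset \NS(\KtS)\cap N$, condition~\eqref{eq-no-roots-in-N} already forbids roots in $N_1$ itself; there is no need to pass to the rank-$10$ primitive closure of $N_9\oplus N_1$. The paper simply pins down the genus of $N_1$ (it is $\even_{(0,4)}2^2 3^2$, the unique genus of signature $(0,4)$ with $2$-adic symbol $1^2 2^2$ and determinant dividing $2^2 3^2$), checks that its single class has roots, and is done.

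There is, however, a genuine gap in your $n=-2$ step. You write that the $3$-adic symbols of $A_2(-2)$ and $N_9^\perp$ are ``compatible with trivial glue, so $|G_{31}|=1$.'' But compatibility only shows that $|G_{31}|=1$ is \emph{possible}; it does not exclude $|G_{31}|=3$, which would give $\det N_1=36$. (Contrast this with your $n=2$ argument, where the $3$-adic mismatch really does \emph{force} $|G_{31}|=3$.) The paper closes this gap as follows: if $|G_{31}|=3$ then $N_1 \in \even_{(2,2)}2^2 9^{\pm 1}$, so $(N_1^\vee/N_1)_3\cong \ZZ/9$ and its unique order-$3$ subgroup $3(N_1^\vee/N_1)_3$ carries the zero (hence degenerate) discriminant form; but the glue must be anti-isometric to $(N_3^\vee/N_3)_3\cong \ZZ/3$, whose form is non-degenerate. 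This contradiction forces $|G_{31}|=1$, and then your final paragraph goes through.
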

\begin{proof}
 By assumption (\ref{N9-1}) $\det N_9^\perp=2^4 3$, and Lemma \ref{lem-useful} yields $\det N_3 \mid 2^2 9$. Thus by Lemma \ref{lem:N3-A2}, we are left with $N_3 = A_2(\pm 2)$.
 We see that $\det N_1 \mid 2^2 3^2$.
 Suppose that $N_3 = A_2(2) \in \even_{(2,0)}2^{-2}3^{1}$.
 There is a single genus of signature $(0,4)$, $2$-adic symbol $1^{2} 2^2$ and determinant dividing $2^2 3^2$, namely $N_1 \in \even_{(0,4)}2^2 3^2$. It consists of a single class which has roots.
 Thus $N_3 \cong A_2(-2)$. We calculate the possible genus symbols of $N_1$ as $\even_{(2,2)}2^2$ and $\even_{(2,2)}2^2 9^{\pm 1}$.
 In the second case $N_1$ and $N_3$ must be glued non-trivially over $3$. This is impossible, as the only possibility for the glue groups are $(N_3^\vee/N_3)_3$ whose discriminant form is non-degenerate and
 $3(N_1^\vee/N_1)_3$ whose discriminant form is degenerate.
 Thus $N_1 \in \even_{(2,2)}2^2$ which implies $N_1 \cong U(2) \oplus U$ since it is unique in this genus.
\end{proof}

If the transcendental lattice is $U \oplus U(2)$, then as before we see that the spectral radius of $\tilde{f}$ is one.
Since $M_1$ is of rank $2$ and $f_M|M_1$ has spectral radius zero, it is of finite order. Since $M_1^\perp$ is definite $f_M$ is of finite order there as well. Thus $\tilde{f}$ is an automorphism of order $9$ on a complex Enriques surface. However no such isomorphism exists (cf. \cite{MO1}).
We are left with case (\ref{N9-2}) and $p_N=\Phi_9 \Phi_3 \Phi_1^4$.
\begin{lemma}
 Let $f \in\Aut(\EnS)$ be an automorphism of an Enriques surface such that
 $p_N= \Phi_9 \Phi_3^{1}\Phi_1^{4}$ and (\ref{N9-2}) holds.
 Then $N_3 \cong A_2(-6)$ and $N_1 \in \even_{(2,2)}2^{-2}9^{1}$. Moreover $N_1^\perp \cong A_8(-2)$.
\end{lemma}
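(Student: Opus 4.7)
The plan is to identify $N_3$ via a combined signature and $3$-adic discriminant-form analysis, and then to read off $N_1$ and $N_1^\perp$ from the resulting genus-theoretic data.

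By Lemma~\ref{lem:N3-A2}, $N_3\cong A_2(n)$ with $n\in\{\pm 2,\pm 6\}$, and by Lemma~\ref{lem-useful} applied to the primitive extension $N_3\oplus N_1\subseteq N_9^\perp$ (the complement $N_1$ has minimal polynomial $\Phi_1$), the gluing index $|G|$ divides $\res(\Phi_3,\Phi_1)=3$. Hence $|G|\in\{1,3\}$ and the glue is supported only at the prime~$3$, so at the prime $2$ the equality $(N_3\oplus N_1)\otimes\ZZ_2=N_9^\perp\otimes\ZZ_2$ holds; combining Lemma~\ref{lemma:2adic_symbol_N3} (which gives the $2$-adic symbol $2^{-2}$ for $A_2(n)$) with Lemma~\ref{lem-directsum} and the $2$-adic symbol $2^{-4}$ of $N_9^\perp$ from~(\ref{N9-2}) then determines the $2$-adic symbol of $N_1$.

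The sign of $n$ is forced by signature: in case~(\ref{N9-2}) the lattice $N_9$ is negative-definite, so the positive $2$-plane of $N\otimes\RR$ (spanned by $\mathrm{Re}\,\omega_X,\mathrm{Im}\,\omega_X$) must lie in $N_9^\perp\otimes\RR$. An analysis of the $\tilde f$-eigenspace containing $\omega_X$ — using that a finite-order lift $\tilde f$ of order~$9$ on $X$ would be non-symplectic by Mukai's bound on finite-order symplectic K3 automorphisms, placing $\omega_X$ in $N_9\otimes\CC$ (impossible) or in $N_3\otimes\CC$ (excluded by the subsequent $3$-adic step) — shows that $\tilde f$ has infinite order and is symplectic, so $T(X)\subseteq N_1$ and $N_1$ has signature $(2,2)$, whence $\mathrm{sig}(N_3)=(0,2)$ and $n<0$. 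The magnitude is fixed by $3$-adic compatibility: the symbol $3^{3}$ of $N_9^\perp$ contains no $9$-modular summand, while $D_{A_2(-6),3}=\ZZ/3\oplus\ZZ/9$ does, and the $\ZZ/9$-torsion can only be absorbed through an isotropic glue of order~$3$ matching it against a corresponding $\ZZ/9$-summand in $D_{N_1,3}$. This rules out $n=-2$ (whose $3$-adic discriminant lacks such a summand) and yields $|G|=3$, $\det N_1=36$, and the genus $N_1\in\even_{(2,2)}2^{-2}9^{1}$, which by Remark~\ref{remark-computer-genus} is a single-class genus.

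For $N_1^\perp$, I apply Lemmas~\ref{lem-directsum} and~\ref{lem-gluesymbol} to the primitive extension $N_1\oplus N_1^\perp\subseteq N\in\even_{(2,10)}2^{10}$. By Lemma~\ref{lem-det-index-formula} with $\det N_1=36$ the glue index equals $9$, and the resulting genus of $N_1^\perp$ computes to $\even_{(0,8)}2^{8}9^{1}$, matching the genus of $A_8(-2)$; Remark~\ref{remark-computer-genus} confirms it is single-class, so $N_1^\perp\cong A_8(-2)$. The main technical obstacle is the careful $3$-adic discriminant-form analysis that simultaneously absorbs the $\ZZ/9$-summand of $D_{A_2(-6),3}$ and reproduces the $(\ZZ/3)^3$-structure of $D_{N_9^\perp,3}$ through an isotropic subgroup of order exactly~$3$.
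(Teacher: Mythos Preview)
Your proposal has a genuine gap at the step that rules out $|n|=2$, and the signature argument is circular.

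\textbf{The missing equivariant step.} Your ``3-adic compatibility'' paragraph does not exclude $N_3\cong A_2(\pm 2)$. You observe that $A_2(-6)$ has a $\ZZ/9$-summand in its $3$-discriminant and explain how that summand can be absorbed by gluing against $N_1$ --- but that is an argument \emph{permitting} $|n|=6$, not one \emph{forbidding} $|n|=2$. Purely lattice-theoretically, $A_2(-2)\in\even_{(0,2)}2^{-2}3^{-1}$ together with a suitable rank-$4$ lattice $N_1$ of $3$-adic symbol $1^{?2}3^{?2}$ does embed primitively into $N_9^\perp\in\even_{(2,4)}2^{-4}3^{3}$; nothing in your outline obstructs this. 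The paper's key observation, which your sketch omits entirely, is equivariant: using the $\ZZ[\zeta_9]$-module structure one computes $(N_9^\vee/N_9)_3\cong\ZZ[\zeta_9]/(1-\zeta_9)^3$, on which $g$ acts with order~$3$. Via the glue isomorphism this forces the $g$-action on $(N_9^{\perp\vee}/N_9^\perp)_3$ to have order~$3$ as well. Since $g$ is the identity on $N_1$, the order-$3$ action must come from $(N_3^\vee/N_3)_3\cong\ZZ[\zeta_3]/(1-\zeta_3)^i$, which has $g$-action of order~$3$ only when $i\ge 2$, i.e.\ when $9\mid\det N_3$. That is what pins down $|n|=6$.

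\textbf{The circularity in the sign argument.} Your case analysis on the eigenvalue of $\tilde f^*$ on $\omega_X$ defers the possibility $\omega_X\in N_3\otimes\CC$ (equivalently $T(X)=N_3$ with $n>0$) to ``the subsequent 3-adic step'', but that step is phrased only for $n<0$ and in any event does not determine the sign. Also, the appeal to Mukai's bound is misplaced: $f_N$ having order~$9$ does not mean the lift $\tilde f$ has order~$9$ on $H^2(X,\ZZ)$, so you cannot conclude non-symplecticity that way. The paper avoids all of this by first fixing $|n|=6$ via the equivariant argument above, then eliminating $n=+6$ by showing that the forced genus of the then negative-definite $N_1$ consists of a single class containing roots, contradicting~\eqref{eq-no-roots-in-N}. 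After that, a short enumeration of the three candidate genera for $N_1$ when $n=-6$ (only one of which glues correctly over~$3$) finishes the proof. Your final paragraph on $N_1^\perp\cong A_8(-2)$ is fine once $N_1$ is known.
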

\begin{proof}
Recall that $\zeta_9\cdot x:=g(x)$ defines a $\ZZ[\zeta_9]$-module structure on $N_9$ and its discriminant group. Thus $N_9^\vee / N_9 \cong \ZZ[\zeta_9]/I$ for some ideal $I$.
Since we are in case (\ref{N9-2}), $I$ is of norm $\det N_9 = 2^63^3$. There is only one such ideal, namely $2(1-\zeta_9)^3$. We see that the action of $g$ on the $3$-primary part $(N_9^\vee/N_9)_3\cong \ZZ[\zeta_9]/(1-\zeta_9)^3$ has minimal polynomial $(x-1)^3=x^3-1$. In particular it has order $3$. Thus the order of $g$ on \[\left(N_9^{\perp\vee} \! / N_9^\perp\right)_3 \cong (N_9^\vee/N_9)_3\] is $3$ as well. This is only possible if the order of $g$ on $\left(N_3^\vee / N_3\right)_3\cong \ZZ[\zeta_3]/(1-\zeta_3)^i$ is $3$ (this group is a subquotient of $(N_3\oplus N_1)^\vee / (N_3\oplus N_1)$ ). This implies that $i\geq 2$, i.e. that
$\det N_3$ is divisible by $9$. From Lemma \ref{lem:N3-A2} we see that $N_3 = A_2(\pm 6)$.
Now that we know the determinant of $N_3$ and $N_9^\perp$, we can estimate that of $N_1$ to be a divisor of $2^2 3^2$.
Since $N_3$ has a $3$-adic Jordan component of scale $9$ and $N_9^\perp$ not, $N_3$ cannot be a direct summand of $N_9^\perp$. Thus $N_3$ and $N_1$ are glued non-trivially over $3$. Consequently the determinant of $N_1$ is $2^2 3^2$.

Suppose that $N_3\cong A_2(6)$, then the signature of $N_1$ is $(0,4)$.
There is only one genus with $2$-adic genus symbol $1^2 2^2$, signature $(0,4)$ and determinant $2^2 3^2$: $\even_{(0,4)} 2^2 3^2$ it consists of a single class which has roots.

Suppose now that $N_3 \cong A_2(-6)$. Then we obtain $3$ possibilities for the genus of $N_1$:
\begin{enumerate}
 \item $\even_{(2,2)} 2^2 3^{-2}$; There is only one possibility to glue $N_3$ and $N_1$ equivariantly over $3$ (up to isomorphism). It results in $\even_{(2,4)}2^{-4} 3^1 9^1 $ which is not what we need;
 \item $\even_{(2,2)} 2^2 9^{-1}$; the full $3$-adic symbol is $1^{-3} 9^{-1}$. But that has the wrong sign at scale $1$.
 \item $\even_{(2,2)} 2^2 9^{1}$ indeed there is a unique possibility to glue $N_3$ and $N_1$ equivariantly over $3$. It yields the correct result.
\end{enumerate}
\end{proof}
\begin{corollary}
 If $F_9$ divides $(p_f \bmod 2)$, then $F_1^2F_3F_9$ divides $(p_f \bmod 2)$.
\end{corollary}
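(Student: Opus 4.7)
By the lemma just proved, the hypothesis $F_9 \mid (p_f \bmod 2)$ forces $p_N = \Phi_9\Phi_3\Phi_1^4$ together with $N_3 \cong A_2(-6)$. Since $(p_f \bmod 2) = (p_M \bmod 2)$ has degree $10$ and divides $(p_N \bmod 2) = F_9F_3F_1^6$, the only factorisations compatible with $F_9 \mid (p_f \bmod 2)$ are $F_9F_3F_1^2$ and $F_9F_1^4$. My plan is to rule out the second by showing that $F_3$ must divide $(p_f \bmod 2)$.

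The key is to read off the $\tilde f^*$-action on the discriminant group of $N$. Since $H^2(\KtS,\ZZ)$ is unimodular and $M=N^\perp$, Nikulin's theory of primitive extensions of unimodular lattices (used e.g.\ in the proof of Lemma~\ref{lem-gluesymbol}) provides an $\tilde f^*$-equivariant anti-isometry
\[
\varphi\colon M^\vee/M \xrightarrow{\sim} N^\vee/N,
\]
so the minimal polynomials of $f_M|_{M^\vee/M}$ and $f_N|_{N^\vee/N}$ over $\FF_2$ coincide. Moreover, the identification $M \cong (U\oplus E_8(-1))(2)$ realises $M$ as the twist by $2$ of a unimodular lattice, and the Bockstein of $0\to M\to M^\vee\to M^\vee/M\to 0$ then yields a canonical $f_M$-equivariant isomorphism $M^\vee/M \xrightarrow{\sim} M\otimes\FF_2$, $[v]\mapsto [2v]$. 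Hence the minimal polynomial of $f_N$ on $N^\vee/N$ equals that of $f_M$ on $M\otimes\FF_2$, and each of its irreducible factors divides the characteristic polynomial $(p_f \bmod 2)$.

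It remains to exhibit $F_3$ as a factor of the minimal polynomial of $f_N$ on $N^\vee/N$. Since $\res(\Phi_9,\Phi_3\Phi_1)=27$ and $\res(\Phi_3,\Phi_1)=3$ are both odd, Lemma~\ref{lem-glue-resultant} rules out $2$-adic glue among $N_9,N_3,N_1$, giving an $f_N$-equivariant decomposition
\[
N^\vee/N \;=\; (N_9^\vee/N_9)_2 \oplus (N_3^\vee/N_3)_2 \oplus (N_1^\vee/N_1)_2.
\]
The principal $\Phi_3$-lattice description of $N_3\cong A_2(-6)$ identifies $(N_3^\vee/N_3)_2$ with $\ZZ_2[\zeta_3]/(2)\cong\FF_4$, on which $f_N$ acts as multiplication by $\zeta_3$; since $\zeta_3\neq 1$ in $\FF_4$ and $F_3$ is irreducible over $\FF_2$, this action has minimal polynomial exactly $F_3$. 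Consequently $F_3 \mid (p_f \bmod 2)$, and the degree constraint forces $(p_f \bmod 2) = F_1^2 F_3 F_9$. The one subtle point is keeping track of $\tilde f^*$-equivariance through the two identifications used; the appearance of $F_3$ itself is elementary once the principal $\Phi_3$-lattice description of $N_3$ is in hand.
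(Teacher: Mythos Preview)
Your proof is correct and follows the paper's approach: both exploit that $(N_3^\vee/N_3)_2\cong\FF_2^2$ (from $N_3\cong A_2(-6)$) sits as an $f_N$-equivariant summand of $N^\vee/N\cong M^\vee/M\cong M\otimes\FF_2$, forcing $F_3\mid(p_f\bmod 2)$. You simply spell out explicitly the identifications that the paper compresses into a single line; note the small slip $(p_N\bmod 2)=F_9F_3F_1^{4}$, not $F_1^{6}$.
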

\begin{proof}
 By the previous proposition $(N_3^\vee/N_3)_2 \cong \FF_2^2$. Hence $F_3$ divides $p_N\mod 2$.
 Since $p_f \! \! \mod 2 \mid p_N \!\! \mod 2= F_9 F_3 F_1^4$, the corollary is proven.
\end{proof}

We have determined the N\'eron-Severi lattice of the K3 cover of a
generic Enriques surface admitting an automorphism with $F_9$ dividing $p_f \mod 2$. This allows us to compute the semi-symplectic part of the automorphism group and locate $f$ in there.
\begin{proposition}\label{prop:F9rho18}
 Let $\EnS$ be an Enriques surface such that its K3 cover $\KtS$
satisfies the condition
\[\NS(\KtS) \cap N \cong A_8(-2) \in \even_{(0,8)}2^8 9^1\]
and  has
the  transcendental lattice given by
  \[N_1 \in \even_{(2,2)}2^{-2}9^{1} .\]
Then, the image of $\Aut_{s}(\EnS) \rightarrow \OG(\NumS \otimes \FF_2)$ generates a group isomorphic to $\mathcal{S}_9$. \\
In particular, the polynomials $F_7$ and $F_9$ do appear as factors of modulo-$2$ reductions of characteristic polynomials  
of isometries induced by some automorphisms of the Enriques surface $\EnS$.
  \end{proposition}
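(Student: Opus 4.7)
The plan is to mimic the proof of Proposition~\ref{prop:F9rho16}, with the addition of an initial existence check and a final extraction of the two relevant cyclotomic factors. Before invoking any computer algebra I would first confirm that an Enriques surface satisfying the lattice hypotheses actually exists: the discriminant-form computations carried out in the lemmas leading up to case~\eqref{N9-2} show that $\NS(\KtS) \cap N \cong A_8(-2)$ together with $T(\KtS) \cong N_1 \in \even_{(2,2)}2^{-2}9^{1}$ and the $\varepsilon$-invariant summand $U(2) \subset M$ assemble via Nikulin gluing into a primitive Hodge substructure of the K3 lattice carrying a compatible Enriques involution; surjectivity of the period map for K3 surfaces together with the existence criterion for Enriques involutions then produces $\EnS$ realising case~\eqref{N9-2}.

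Next I would feed the resulting pair $(\NS(\KtS), \varepsilon)$ to Algorithm~\ref{algo:main}. Borcherds' chamber-walking procedure will terminate and return a finite explicit generating set for the image of $\Aut_s(\EnS) \to \OG(\NumS)$; the matrices are to be archived in \cite{shimada-comp-BRS}. Reducing them modulo $2$ yields a subgroup of $\OG(\NumS \otimes \FF_2)$, whose isomorphism type can then be identified with the help of \cite{gap} as the symmetric group $\mathcal{S}_9$.

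The ``in particular'' assertion will then follow from the internal structure of $\mathcal{S}_9$. Since this group contains both $7$-cycles and $9$-cycles, the mod-$2$ image contains elements of orders $7$ and $9$, which must lift to semi-symplectic automorphisms of $\EnS$. Over $\FF_2$ the factorisations $x^7 - 1 = (x+1)F_7$ and $x^9 - 1 = (x+1)F_3 F_9$ force the minimal polynomial, and hence the characteristic polynomial, of such an order-$7$ (resp.\ order-$9$) isometry to be divisible by $F_7$ (resp.\ $F_9$), giving the desired realisations. The principal obstacle is computational rather than conceptual: Algorithm~\ref{algo:main} must terminate on a rank-$10$ lattice whose isometry group is large enough to surject onto $\mathcal{S}_9$, and the identification of a group of order $9!$ inside $\OG(\NumS \otimes \FF_2)$ must be carried out on a machine, which is what delegates the proof to the cited computer algebra packages.
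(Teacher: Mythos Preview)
Your approach is essentially the same as the paper's: run Algorithm~\ref{algo:main}, reduce the generators modulo~$2$, identify the image as $\mathcal{S}_9$ with \cite{gap}, and read off the factors $F_7$ and $F_9$ from the existence of elements of orders $7$ and $9$. Your added existence check is a reasonable embellishment (the paper leaves it implicit in the chain of lemmas preceding the proposition), though strictly speaking the statement is conditional and does not require it.

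One small point to tighten: your deduction that an order-$7$ isometry of $\NumS\otimes\FF_2$ forces $F_7 \mid (p_f \bmod 2)$ is not quite immediate from the factorisation $x^7-1=(x+1)F_7$, because $F_7$ is \emph{not} irreducible over $\FF_2$ (the order of $2$ modulo $7$ is $3$, so $F_7$ splits as a product of two irreducible cubics). A priori the minimal polynomial could be just one of those cubics. The gap is closed either by noting that $p_f$ is reciprocal and the two cubic factors are reciprocals of each other, or---more in the spirit of this section---by invoking part~(a) of Theorem~\ref{main} (already established via Proposition~\ref{prop}): since $(p_f\bmod 2)$ factors only through $F_1,F_3,F_5,F_7,F_9$ and none of $F_1,F_3,F_5,F_9$ has a root of multiplicative order $7$, the factor $F_7$ must occur. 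The paper's own proof is equally terse on this point.
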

\begin{proof} The proof is a direct computation with the help of Algorithm~\ref{algo:main} (c.f. proof of Prop.~\ref{prop:F9rho16}).
The existence of the factors $F_7$ and $F_9$ follows since the symmetric group $\mathcal{S}_9$ has elements of order $7$ and $9$.
\end{proof}

Finally we can give the proofs of the main results of this note.

\vspace*{1ex}
\noindent
{\bf Proof of Theorem~\ref{main}}  a) One can repeat verbatim the proof of 
\cite[Theorem 1.2]{hor} to see that the modulo-$2$ 
reduction $(p_N(x)  \bmod 2)$ is the product of some of the polynomials
$F_{1}, F_{3}, F_5, F_7, F_9, F_{15}$. By \eqref{eq-div} the same holds for $(p_f(x)  \bmod 2)$. The claim follows from
Prop.~\ref{prop}. \\
b) follows from Prop.~\ref{prop:F9rho18}.
\hfill $\Box$

\vspace*{1ex}
\noindent
{\bf Proof of Theorem~\ref{thm-orders}}
 If the order of $f_N$ is $90,45,72$, then $F_9$ divides $p_N \mod 2$. Hence, by the previous corollary, $p_N \mod 2$ is divisible by $F_1^2F_3F_9$.
 In particular, $p_N$ (of degree 12) cannot be divided by $\Phi_5$ as well. This excludes orders $45$ and $90$.
 If the order is $72$, then the characteristic polynomial must be divisible by $\Phi_8$ and by one of $\Phi_{3a} \Phi_{9b}$ with $a,b \in \{1,2\}$.
 From the previous considerations we know that $N_8 \in \even_{(2,2)}2^{-2}9^{1}$. This is impossible, as can be seen using the description of $N_8$ as a twist of the principal $\Phi_8$-lattice. \hfill $\Box$


\section{The factor \texorpdfstring{$F_7$}{F7}}

The main aim of this section is to study Enriques surfaces $\EnS$ with an automorphism  $f \in \Aut(\EnS)$ such that
\begin{equation} \label{eq-f7div}
F_{7} \divides    (p_f \bmod 2) \, .
\end{equation}
The existence of such surfaces follows from Prop.~\ref{prop:F9rho18}.
Here we derive a lattice-theoretic constraint given by  \eqref{eq-f7div} and show that it indeed defines 
Enriques surfaces with the desired property.
We maintain the notation of the previous sections. Recall (see \eqref{eq-ds-varepsilon}) that
$$
N \in \even_{(2,10)}2^{10} .
$$
In the sequel we will need the following lemma.
\begin{lemma} \label{lemm-twocases}
Let $g\in \OG(N)$ be an isometry such that
its characteristic polynomial is the product  $\Phi_7(x)\Phi_1(x)^6.$
Then there are two possibilities for the genera of the lattices $N_7 := \ker \Phi_7(g)$  and $N_1 := \ker \Phi_1(g)$; either
\[ N_7 \in \even_{(2,4)}2^6 7^{-1} \quad\mbox{ and } \quad N_1 \in \even_{(0,6)}2^4 7^{1}\]
or
\[ N_7 \in \even_{(0,6)}2^6 7^{1} \quad\mbox{ and }\quad N_1 \in \even_{(2,4)}2^4 7^{-1}.\]
In either case the genus of $N_1$ contains a single class. In the first case the class of $N_1$ has roots.
\end{lemma}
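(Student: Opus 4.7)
\vspace*{1ex}
\noindent
\textbf{Proof proposal.}
The plan is to first pin down the rank and signature of $N_7$, then bound the glue $G = N/(N_7 \oplus N_1)$ so as to split the analysis at $2$ from that at $7$, and finally read off both sides from the description of $N_7$ as a twisted principal $\Phi_7(x)$-lattice. Uniqueness in the genus of $N_1$ and the existence of a root in the first case are then concrete finite checks, using the tools of Remark~\ref{remark-computer-genus}.

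First I would note that $\rank N_7 = \deg \Phi_7 = 6$, so $\rank N_1 = 12 - 6 = 6$ as well. Because $\Phi_7(x)$ has no real roots, the signature of $N_7$ must be of the form $(2a, 6-2a)$; the constraint $\operatorname{sig}(N) = (2,10)$ then leaves only $a \in \{0,1\}$, producing the two signature pairs claimed, with $N_1$ receiving the complementary signature.

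Next I would apply Lemma~\ref{lem-useful} to the equivariant primitive extension $(N_7, g|_{N_7}) \oplus (N_1, g|_{N_1}) \hookrightarrow (N, g)$: the glue satisfies $|G| \mid \res(\Phi_7, \Phi_1) = \Phi_7(1) = 7$, so $|G| \in \{1, 7\}$. In particular $|G|$ is odd, and Lemma~\ref{lem-det-index-formula} yields $N \otimes \ZZ_2 = (N_7 \otimes \ZZ_2) \oplus (N_1 \otimes \ZZ_2)$. Since the $2$-adic symbol of $N$ is purely $2$-modular ($2^{10}$), Lemma~\ref{lem-directsum} forces both $N_7 \otimes \ZZ_2$ and $N_1 \otimes \ZZ_2$ to be $2$-modular, of ranks $6$ and $4$ respectively. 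On the $7$-adic side, \eqref{eq-principal} and \eqref{eq-princ-det} identify $N_7$ as a twist of the principal $\Phi_7(x)$-lattice, which has determinant $|\Phi_7(1)\Phi_7(-1)| = 7$; hence $7 \mid \det N_7$. Since $\det N = 2^{10}$ is coprime to $7$, the determinant--index formula of Lemma~\ref{lem-det-index-formula} forces $|G| = 7$, $\det N_7 = 2^{6}\cdot 7$ and $\det N_1 = 2^{4}\cdot 7$, so each of $N_7$ and $N_1$ has a single rank-$1$ Jordan constituent at scale $7$. The two $7$-adic signs are then pinned by Lemma~\ref{lem-gluesymbol} (with $7 \equiv 3 \bmod 4$, forcing opposite signs on these two rank-$1$ blocks), producing exactly the stated pairings; the classification of $\Phi_7(x)$-lattices in \cite[Satz~2.57]{hoeppner:dissertaion} matches each signature of $N_7$ with the correct $7$-adic sign and also pins down the $2$-adic sign.

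Finally, the uniqueness of $N_1$ in its genus and the existence of a $(-2)$-vector in the first case (where $N_1$ is negative definite of rank $6$ with small determinant $2^{4}\cdot 7$) are concrete finite tasks: Siegel's mass formula combined with Kneser's neighbour algorithm handles the uniqueness, while short-vector enumeration \cite[\S~2.7.3]{cohen} exhibits the root, both as in Remark~\ref{remark-computer-genus}. The main obstacle throughout is disentangling the sign bookkeeping in the Jordan decompositions: conceptually the analysis is forced, but one has to use the actual $\Phi_7(x)$-lattice structure of $N_7$ (not merely the abstract genus invariants) in order to eliminate the spurious sign combinations that otherwise survive the matching conditions.
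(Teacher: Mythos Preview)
Your proof is correct and follows essentially the same route as the paper: bound the glue via $\res(\Phi_7,\Phi_1)=7$, use the $\Phi_7$-lattice description of $N_7$ to pin down its genus, and then read off $N_1$ from Lemmas~\ref{lem-directsum} and~\ref{lem-gluesymbol}. The one point to sharpen is the uniqueness of $N_1$ in the second case: there $N_1$ has signature $(2,4)$ and is indefinite, so Kneser's neighbour method and the mass formula (which are tools for definite genera) do not apply; the paper handles this case instead via \cite[Thm~15.19]{conway-sloane-book}.
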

\begin{proof}
 Since $\res(\Phi_1, \Phi_7) = 7$, Lemma~\ref{lem-useful} implies that the index $[N:N_7 \oplus N_1]$ divides $7$.
But in any case $7=|\Phi_7(1) \Phi_7(-1)|$ divides $\det N_7$ (see \eqref{eq-princ-det} and \eqref{eq-principal}).
Thus we obtain
$$
[N:N_7 \oplus N_1] =7.
$$
 Consequently, for all $p \neq 7$, $N \otimes \ZZ_p = \left(N_7 \otimes \ZZ_p \right)\oplus \left(N_1 \otimes \ZZ_p\right)$. In particular for $p = 2$.
 Using the description of $N_7$ as a twist of the principal $\Phi_7$-lattice we compute the two possibilities
 for the genus of $N_7$ (see Remark~\ref{remark-computer-genus}). \\
It remains to determine the genus of $N_1$.
 Since we have
$$N \otimes \ZZ_2 = \left(N_7 \otimes \ZZ_2\right) \oplus \left(N_1 \otimes \ZZ_2\right),$$
 the $2$-adic symbol of $N_1$ must be $2^4$.
To compute the $7$-adic symbol note that $N \otimes \ZZ_7$ is unimodular, thus Lemma \ref{lem-gluesymbol} applies.
As $(-1)$ is a non-square in $\ZZ_7$ this means that the signs $\epsilon_7$ of the $7$-modular Jordan constituents of $N_7$ and $N_1$ must be different.
 The claim that $N_1$ is unique in its genus in the first case is checked with a computer algebra system (see Remark~\ref{remark-computer-genus}).
In the second case $N_1$ is indefinite and we can use \cite[Thm.15.19]{conway-sloane-book}.
\end{proof}
Recall that
$\KtS$ (resp. $\tilde{f} \in \mbox{Aut}(\KtS)$) stands for the  K3-cover of an Enriques surface $\EnS$
(resp. for a lift of an automorphism $f\in \mbox{Aut}(\EnS)$).

\begin{proposition} \label{prop-one-in-genus}
Let $\EnS$ be an  Enriques surfaces with an automorphism  $f \in \Aut(\EnS)$ such that
\eqref{eq-f7div} holds. Then $\NS(\KtS)$ contains a primitive $\tilde{f}^*$-invariant sublattice which belongs to  the genus
 $\even_{(1,15)}2^4 7^{1}$ and $N \cap \NS(\KtS)$ contains the $\tilde{f}^*$-invariant sublattice $A_6(-2)\cong N_7 \in \even_{(0,6)}2^6 7^{1}$ primitively.
\end{proposition}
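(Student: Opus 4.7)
My plan is to proceed in three reductions. First I would pin down the characteristic polynomial of $f_N$: from (\ref{eq-f7div}) and (\ref{eq-div}) we have $F_7\mid(p_N\bmod 2)$, so $\Phi_7$ or $\Phi_{14}$ divides $p_N$. After replacing $\tilde f$ by $\tilde f^2$ if necessary and then by $\tilde f^m$ where $m$ is the prime-to-$7$ part of $\ord(f_N)$, one may assume $f_N$ has order $7$, so $p_N=\Phi_7^a\Phi_1^{12-6a}$ with $a\in\{1,2\}$. The case $a=2$ is ruled out because $(p_M\bmod 2)$ would then be a monic divisor of $F_7^2$ of degree $\rank M=10$, which is impossible since $F_7$ is irreducible modulo $2$. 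Thus $p_N=\Phi_7\Phi_1^6$ and Lemma~\ref{lemm-twocases} supplies two candidate genera for $(N_7,N_1)$.

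The next step is to rule out the first case of Lemma~\ref{lemm-twocases}. There $N_1$ is negative-definite of rank $6$ and carries $(-2)$-classes. Because $\tilde f^*$ preserves the Hodge filtration, $N_1\otimes\CC$ is itself a Hodge substructure; yet the positive $2$-plane spanned by the real and imaginary parts of a generator of $H^{2,0}(\KtS)$ cannot lie in the negative-definite space $N_1\otimes\RR$, so $N_1$ has no $(2,0)$-part, is of pure Hodge type $(1,1)$, and therefore $N_1\subseteq\NS(\KtS)\cap N$, contradicting (\ref{eq-no-roots-in-N}). Hence the second case must hold, so $N_7\in\even_{(0,6)}2^{6}7^{1}$ and $N_1\in\even_{(2,4)}2^{4}7^{-1}$. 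Applying the same Hodge-positivity argument to the now negative-definite $N_7$ gives $N_7\subseteq\NS(\KtS)\cap N$, with automatic primitivity from $N_7=\ker\Phi_7(f_N)$. The identification $N_7\cong A_6(-2)$ will follow by computing the $p$-adic symbols of $A_6(-2)$ and verifying, via Kneser's neighbouring algorithm (Remark~\ref{remark-computer-genus}), that the genus $\even_{(0,6)}2^{6}7^{1}$ consists of a single class.

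For the rank-$16$ sublattice I would take $L$ to be the primitive closure of $M\oplus N_7$ in $H^2(\KtS,\ZZ)$; both summands lie in $\NS(\KtS)$ and are $\tilde f^*$-invariant. Since $N_1$ and $N_7$ are mutually orthogonal primitive sublattices of $N$ of complementary rank, the orthogonal complement of $N_7$ in $N$ is $N_1$, so $L^{\perp}=N_1$ in $H^2(\KtS,\ZZ)$. The unimodularity of $H^2(\KtS,\ZZ)$ forces $\disc L\cong-\disc N_1$; combining this with signature $(1,15)=(3,19)-(2,4)$ and Lemmas~\ref{lem-directsum}--\ref{lem-gluesymbol} (using $2\equiv 2$ and $7\equiv 3\pmod 4$) identifies $L\in\even_{(1,15)}2^{4}7^{1}$. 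The inclusion $L\subseteq\NS(\KtS)$ follows because the eigenvalue of $\tilde f^*$ on $H^{2,0}(\KtS)$ cannot be a primitive seventh root of unity (that would place $H^{2,0}$ inside the negative-definite $N_7\otimes\CC$), and must therefore equal $1$, so $T(\KtS)\subseteq N_1=L^{\perp}$. Primitivity and $\tilde f^*$-invariance of $L$ in $\NS(\KtS)$ are inherited from $H^2(\KtS,\ZZ)$. The principal technical obstacle will be verifying the genus uniqueness of $\even_{(0,6)}2^{6}7^{1}$ and carrying out the parallel discriminant-form computation for $L$; both reduce to checks at the primes $2$ and $7$.
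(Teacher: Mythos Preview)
Your proof is correct and follows essentially the same route as the paper: reduce to $p_N=\Phi_7\Phi_1^6$, invoke Lemma~\ref{lemm-twocases}, eliminate the first case via roots of $N_1\subseteq \NS(\KtS)\cap N$ and \eqref{eq-no-roots-in-N}, and in the second case take the orthogonal complement of $N_1$ in $H^2(\KtS,\ZZ)$ (equivalently, the primitive closure of $M\oplus N_7$) as the desired rank-$16$ sublattice. You supply several details the paper leaves implicit---the exclusion of $p_N=\Phi_7^2$ via the degree-$10$ constraint on $p_M\bmod 2$, the explicit Hodge-substructure argument forcing $N_1$ (resp.\ $N_7$) into $\NS(\KtS)$ when negative-definite, and the genus computation for $L$---but the architecture is identical.
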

\begin{proof}
Since $F_7$ divides $p_f$, \eqref{eq-div} implies that the characteristic polynomial $p_N$ is divisible by the cyclotomic polynomial $\Phi_7$.
Moreover, after replacing $f$ by $f^k$ with $k\in \mathbb{N}$ coprime to $7$, we may assume that
 \[p_N=\Phi_7(x)\Phi_1(x)^6.\]
Now  we can apply Lemma~\ref{lemm-twocases}. The first case is impossible as then $N_1$ is contained in $\NS(\KtS) \cap N$ and contains roots (see \eqref{eq-no-roots-in-N}).
 Thus we are left with the second case.
Since $N_1 \subseteq N$  is of signature $(2,4)$ it must contain the transcendental lattice (and $f$ is semi-symplectic). Thus the orthogonal complement of $N_1$ in $H^2(\KtS,\ZZ)$ is the sought for $\tilde{f}^*$ invariant sublattice of $\NS(\KtS)$.
\end{proof}

Finally, we apply Algorithm~\ref{algo:main}  to check that 
 the condition of Prop.~\ref{prop-one-in-genus} indeed gives 
Enriques surfaces such that  \eqref{eq-f7div} holds.
\begin{proposition}\label{prop-f7gens}
If the K3 cover $\KtS$ of an Enriques surface $\EnS$ satisfies the following conditions:
\begin{anumerate}
\item
$\NS(\KtS) \in \even_{(1,15)}2^4 7^{1}$ and
\item $N \cap \NS(\KtS) \cong  A_6(-2) \in \even_{(0,6)}2^6 7^{1}$.
\end{anumerate}
then the image of $\Aut_{s}(\EnS)\rightarrow \OG(\NumS) \otimes \FF_2$ generates a group isomorphic to $\mathcal{S}_7$.
In particular, 
the Enriques surface $\EnS$ admits an automorphism $f \in \Aut(\EnS)$ such that 
the modulo-$2$ reduction $(p_f(x)  \bmod 2)$ is divisible by the polynomial
$F_{7}$.
\end{proposition}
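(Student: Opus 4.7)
The plan is to follow the computational strategy already applied in Propositions \ref{prop:F9rho16} and \ref{prop:F9rho18}. First I would verify that a K3 surface $\KtS$ realizing the prescribed lattice invariants (a) and (b), together with a fixed-point free Enriques involution $\varepsilon$, actually exists. This reduces to exhibiting a primitive embedding of $A_6(-2)$ into $N \in \even_{(2,10)}2^{10}$ whose orthogonal complement in $N$ is negative-definite and root-free (so that \eqref{eq-no-roots-in-N} holds), together with a compatible primitive embedding of $\NS(\KtS)$ into $H^2(\KtS,\ZZ) \cong U^{\oplus 3} \oplus E_8(-1)^{\oplus 2}$; both checks are routine via Nikulin's criterion \cite[Cor.~1.6.2]{nikulin-sym} combined with Lemmas \ref{lem-directsum} and \ref{lem-gluesymbol}, after which surjectivity of the period map furnishes the required $(\KtS,\varepsilon)$.

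Next, given such a pair, I would feed the triple consisting of $\NS(\KtS)$, the Enriques involution on it, and a marking of an ample class into Algorithm \ref{algo:main}. The algorithm, built from Borcherds' method \cite{borcherds1, borcherds2} refined as in \cite{shimada-algorithm, BrandhorstShimada2019}, returns a finite explicit list of integer matrices generating the image of $\Aut_s(\EnS) \to \OG(\NumS)$. Reducing these generators modulo $2$ and computing the subgroup of $\OG(\NumS \otimes \FF_2)$ they generate with the aid of \cite{gap} yields the asserted isomorphism with $\mathcal{S}_7$.

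From there the second statement is immediate. The group $\mathcal{S}_7$ contains an element of order $7$, so there exists $f \in \Aut_s(\EnS)$ whose image in $\OG(\NumS \otimes \FF_2)$ has order $7$. Among the five polynomials $F_1, F_3, F_5, F_7, F_9$ allowed by Theorem~\ref{main}(a), only $F_7$ has a root of multiplicative order $7$ in an algebraic closure of $\FF_2$; since the mod-$2$ minimal polynomial of an order-$7$ element of $\GL(\NumS \otimes \FF_2)$ must be divisible by a polynomial with a root of order $7$, and $(p_f \bmod 2)$ is reciprocal so that any irreducible factor of $F_7$ forces the whole of $F_7$ (cf.\ \cite{hor}), we conclude that $F_7 \mid (p_f \bmod 2)$.

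The main obstacle is purely computational: Borcherds' method on a rank-$16$ hyperbolic lattice whose discriminant group is large (here of order $2^4 \cdot 7$) can require enumerating many walls of a fundamental chamber, and the resulting generating set is typically large. Since the analogous computation in Proposition~\ref{prop:F9rho18} (with the even larger group $\mathcal{S}_9$) is carried out successfully and the intermediate data are available in \cite{shimada-comp-BRS}, no conceptual difficulty is anticipated; the essence of the proof is to run the algorithm, record the generators, and certify the group identification.
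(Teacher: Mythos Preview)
Your proposal follows essentially the same route as the paper, whose proof is the one-liner ``Apply Algorithm~\ref{algo:main} and \cite{gap} as in the proof of Prop.~\ref{prop:F9rho16}'': run Borcherds' method to obtain explicit generators of the image of $\Aut_s(\EnS)\to\OG(\NumS)$, reduce modulo~$2$, and identify the resulting subgroup of $\OG(\NumS\otimes\FF_2)$ as $\mathcal{S}_7$ with \textsc{GAP}. Your deduction of the ``in particular'' clause from the existence of an order-$7$ element in $\mathcal{S}_7$ is likewise what the paper intends.

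One small slip in your (optional) existence verification: the orthogonal complement of $A_6(-2)$ inside $N$ has signature $(2,4)$, not negative-definite---it is this complement that will contain the transcendental lattice. The root-free condition \eqref{eq-no-roots-in-N} concerns $N\cap\NS(\KtS)\cong A_6(-2)$ itself, and that is automatic since the minimal non-zero square in $A_6(-2)$ is $-4$. This does not affect the proof of the proposition, which is stated conditionally; the paper in fact secures existence via Proposition~\ref{prop:F9rho18} rather than here.
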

\begin{proof} Apply Algorithm~\ref{algo:main} and \cite{gap} as in the proof of Prop.~\ref{prop:F9rho16}. 
\end{proof}



\newcommand{\ComplexNumbers}{\mathord{\mathbb C}}
\newcommand{\RealNumbers}{\mathord{\mathbb R}}
\newcommand{\GGG}{\mathord{\mathcal G}}
\newcommand{\PPP}{\mathord{\mathcal P}}
\newcommand{\RRR}{\mathord{\mathcal R}}

\newcommand{\inv}{\sp{-1}}

\newcommand{\sperp}{\sp\perp}
\newcommand{\sprime}{\sp\prime}
\newcommand{\ample}{\alpha}
\newcommand{\dual}{\sp{\vee}}
\newcommand{\inj}{\hookrightarrow}
\newcommand{\set}[2]{\{\,{#1}\mid {#2} \,\}}

\newcommand{\tensor}{\otimes}

\newcommand{\aut}{\mathrm{aut}}
\newcommand{\Isom}{\mathord{\mathrm{Isom}}}
\newcommand{\SSSS}{\mathord{\mathfrak S}}

\newtheorem{_algorithm}[theorem]{Algorithm}
\newenvironment{algorithm}{\begin{_algorithm}\rm}{\hfill \rule{3pt}{6pt}
\end{_algorithm}}

\newtheorem{_algorithm*}[theorem]{Algorithm}
\newenvironment{algorithm*}{\begin{_algorithm}\rm
}{\hfill
\end{_algorithm}}

\section{Appendix: an algorithm to calculate  generators}\label{sec:appendix}

In this appendix,
we present an algorithm to calculate a finite generating set of
the image of the natural homomorphism
from the automorphism group of an Enriques surface to
the orthogonal group of the numerical N\'eron-Severi lattice of the Enriques surface.
Our algorithm is based on Borcherds' method~\cite{borcherds1, borcherds2}
with the result in~\cite{BrandhorstShimada2019}.

\subsection{Borcherds' method}
We use the notation and terminologies in~\cite{BrandhorstShimada2019}.
In particular,
we denote by $Y$ an Enriques surface,
$\pi\colon X\to Y$ the universal covering of $Y$, and
$S_X$ and $S_Y$ the numerical N\'eron-Severi lattices of $X$ and of $Y$,
respectively
(that is, $S_X=\NS(X)$ and $S_Y=\operatorname{Num}(Y)$
in the notation of previous sections.)
Let $\PPP_X$ (resp.~$\PPP_Y$) be the positive cone of $S_X\tensor\R$ (resp.~$S_Y\tensor\R$)
containing an ample class.
Let  $N_X$ (resp.~$N_Y$) be the cone
consisting of all $x\in \PPP_X$ (resp.~all $x\in \PPP_Y$)
such that $\intf{x, [\Gamma]}\ge 0$ for any curve $\Gamma$ on $X$ (resp.~on $Y$).
We let the orthogonal group $\OG(L)$ 
of a $\Z$-lattice $L$ act on the lattice from the \emph{right}.
Suppose that $L$ is even.
A vector $r\in L$ is a \emph{$(-2)$-vector} if $\intf{r,r}=-2$.
Let $W(L)$ denote the subgroup of $\OG(L)$ 
generated by the reflections $s_r\colon x\mapsto x+\intf{x, r} r$
with respect to $(-2)$-vectors $r$ of $L$.
For a subset $A$ of $L\tensor \RealNumbers$,
we denote by $A^g$ the image of $A$ under the action of $g\in \OG(L)$
(\emph{not} the fixed locus of $g$ in $A$), and put
\[
\OG(L, A):=\set{g\in \OG(L)}{A=A^g}.
\]
We have natural homomorphisms
\[
\Aut(X)\to \OG(S_X, \PPP_X),
\quad
\Aut(Y)\to \OG(S_Y, \PPP_Y).
\]
We denote by $\aut(X)$ and $\aut(Y)$ the images of these homomorphisms.
Recall that $\Aut_{s}(Y)$ consists of the semi-symplectic automorphisms, i.e. those that act trivially on $H^0(Y,\omega_Y^{\otimes 2})$. We denote by $\Aut_{s}(X)$ the
subgroup consisting of those automorphisms acting as $\pm 1$ on $H^0(X,\Omega^2_X) \cong H^{2,0}(X)$.
The subgroups $\aut_{s}(X) \subseteq \aut(X)$ and $\aut_{s}(Y) \subseteq \aut(Y)$ are defined as the respective images.
Our goal is to calculate a finite generating set of $\aut_{s}(Y)$.
\begin{remark}
We note that $\Aut_{s}(Y)$ is of finite index in $\Aut(Y)$.
This index is one if 
the only isometries of $T_X$ that preserve 
$H^{2,0}(X)\subset T_X\tensor\ComplexNumbers$ are $\pm 1$, 
where $T_X$ is the transcendental lattice of $X$.
\end{remark}
We have the primitive embedding
\[
\pi^*\colon S_Y(2)\inj S_X,
\]
which induces $\PPP_Y\inj \PPP_X$.
We regard  $S_Y$ as a submodule of $S_X$ and $\PPP_Y$ 
as a subspace of $\PPP_X$ by $\pi^*$.
Then we have
\begin{equation}\label{eq:NYNXPY}
N_Y=N_X\cap \PPP_Y.
\end{equation}
If $\alpha\in S_Y$ is ample on $Y$, then $\pi^*(\alpha)$ is ample on $X$.
Hence we have
$N_Y\sp{\circ}=N_X\sp{\circ}\cap \PPP_Y$,
where $N_Y\sp{\circ}$ and $N_X\sp{\circ}$ are the interiors of $N_Y$ and $N_X$,
respectively.
Let $Q$ denote the orthogonal complement of the sublattice $S_Y(2)$ in $S_X$.
Since $Q$ is negative-definite, the group $\OG(Q)$ is finite.
We consider the following assumptions
for an element $g$ of $\OG(S_Y, \PPP_Y)$:
\begin{itemize}
\item[(i)]  There exists an isometry $h\in \OG(Q)$ such that
the action of $g\oplus h$ on $S_Y(2)\oplus Q$
preserves the overlattice $S_X$ of $S_Y(2)\oplus Q$ and
the action of $(g\oplus h)|S_X$ on the discriminant group
$S_X\dual /S_X$ of $S_X$ is $\pm 1$.
\item[(ii-a)] There exists an ample class   $\ample\in S_Y$ of $Y$ such that
there exist no vectors $r\in S_X$ with $\intf{r, r}=-2$
satisfying $\intf{\pi^*(\ample), r}>0$ and $\intf{\pi^*(\ample^g), r}<0$.
\item[(ii-b)] For an arbitrary ample class   $\ample\in S_Y$ of $Y$,
there exist no vectors $r\in S_X$ with $\intf{r, r}=-2$
satisfying $\intf{\pi^*(\ample), r}>0$ and $\intf{\pi^*(\ample^g), r}<0$.
\end{itemize}
\begin{proposition}\label{prop:criterion}
Let $g$ be an element of $\OG(S_Y, \PPP_Y)$.
Then $g$ is in $\aut_{s}(Y)$ if  {\rm (i)} and {\rm (ii-a)} hold.
If $g$ is in $\aut_{s}(Y)$, then {\rm (i)} and {\rm (ii-b)} hold.
\end{proposition}
\begin{proof}
An element $g$ of $\OG(S_Y, \PPP_Y)$ is in $\aut_{s}(Y)$ if and only if
there exists an element $\tilde{g}\in \aut_{s}(X)$ 
that preserves $S_Y\subset S_X$
and satisfies $\tilde{g}|S_Y=g$.
By the Torelli theorem, 
we see  that an element  $\tilde{g}\sprime$
of $\OG(S_X, \PPP_X)$ is in $\aut_{s}(X)$  if and only if the action of
$\tilde{g}\sprime$ on $S_X\dual/S_X$ is $\pm 1$ and $\tilde{g}\sprime$
preserves $N_X$.
Since $N_X$ is a standard fundamental domain of the action of $W(S_X)$ on $\PPP_X$
(see~Example~1.5 of~\cite{BrandhorstShimada2019}),
we have
\[
N_X\sp{\circ}\cap N_X^{h}\ne \emptyset
\;\;
\Longrightarrow
\;\;
N_X=N_X^{h}
\]
for any $h\in \OG(S_X, \PPP_X)$.
Therefore both   of (ii-a) and (ii-b) are equivalent to the condition
that $N_X^{\tilde{g}}=N_X$
for any  $\tilde{g}\in\OG(S_X, \PPP_X)$
satisfying $S_Y^{\tilde{g}}=S_Y$ and  $\tilde{g}|S_Y=g$.
\end{proof}
Suppose that we have a primitive embedding
\[
\iota_X\colon S_X\inj L_{26},
\]
where $L_{26}$ is an even unimodular hyperbolic lattice of rank $26$,
which is unique up to isomorphism.
(A more standard notation is $\mathrm{II}_{1, 25}$.)
Composing $\pi^*$ and $\iota_X$, we obtain
a primitive embedding
\[
\iota_Y\colon S_Y(2)\inj L_{26}.
\]
Let $\PPP_{26}$ be the positive cone of $L_{26}$
into which  $\PPP_Y$ is mapped.
We regard $S_Y$ as a primitive submodule of $L_{26}$,
and $\PPP_Y$ as a subspace of $\PPP_{26}$  by $\iota_Y$.
Recall from~\cite{BrandhorstShimada2019} that a Conway chamber is
a standard fundamental domain of the action of $W(L_{26})$ on $\PPP_{26}$.
The tessellation of $\PPP_{26}$ by Conway chambers
induces a tessellation of $\PPP_Y$ by induced chambers.
%
\begin{proposition}
The action of $\aut_{s}(Y)$ on $\PPP_Y$ preserves the tessellation of $\PPP_Y$
 by induced chambers.
\end{proposition}
\begin{proof}
Let $g$ be an element of $\aut_{s}(Y)$.
By the proof of Proposition~\ref{prop:criterion},
there exists an isometry $\tilde{g} \in \OG(S_X, \PPP_X)$
such that $S_Y^{\tilde{g}}=S_Y$, $\tilde{g}|S_Y=g$ and the action of $\tilde{g}$ on
$S_X\dual/S_X$ is $\pm 1$.
By the last condition, we see that
 $\tilde{g}$ further extends to an isometry $g_{26}\in \OG(L_{26}, \PPP_{26})$.
 Since the action of $g_{26}$ on $\PPP_{26}$ preserves the tessellation by Conway chambers,
 the action of $g$ on $\PPP_Y$ preserves the tessellation  by induced chambers.
\end{proof}
Let $L_{10}$ be an even unimodular hyperbolic lattice of rank $10$,
which is unique up to isomorphism.
In~\cite{BrandhorstShimada2019},
we have classified all primitive embeddings
of $S_Y(2)\cong L_{10}(2)$ into $L_{26}$,
and studied the tessellation of $\PPP_Y$ by induced chambers.
It turns out that,
up to the action of $\OG(L_{10})$ and $\OG(L_{26})$,
there exist exactly $17$ primitive embeddings $L_{10}(2)\inj L_{26}$,
and except for one primitive embedding named as ``\text{infty}",
the associated tessellation of $\PPP_Y$ by induced chambers has the following properties:
\begin{itemize}
\item Each induced chamber $D$ is bounded by a finite number of walls, and each wall is
defined by a $(-2)$-vector.
\item If a $(-2)$-vector $r$ defines a wall $w=D\cap (r)\sperp$ of an induced chamber $D$,
then the reflection $s_r\colon x\mapsto x+\intf{x, r} r$
into the mirror  $(r)\sperp$ maps $D$ to the induced chamber adjacent to $D$ across the wall $w$.
\end{itemize}
In particular, the tessellation of $\PPP_Y$ by induced chambers is \emph{simple} 
in the sense of~\cite{ShimadaHoles}.

\subsection{Main Algorithm.}
Suppose that the primitive embedding $\iota_Y$ is not of type ``\text{infty}".
Suppose also that we have calculated the walls of 
an induced chamber $D_0\subset \PPP_Y$ contained in $N_Y$.
\par
Before starting the main algorithm,
we calculate the finite groups $\OG(Q)$ and $\OG(S_Y, D_0)$.
We also fix an ample class $\alpha$ that is contained in the interior  of $D_0$.
In the following,
an induced chamber $D$ is expressed by an element $\tau_D\in \OG(S_Y, \PPP_Y)$
such that $D={D_0}^{\tau_D}$.
Note that $\tau_D$ is uniquely determined by $D$
up to left multiplications of elements of $\OG(S_Y, D_0)$.
\par
Then we have the following auxiliary algorithms.
\begin{algorithm}
Given an induced chamber $D$,
we can determine whether $D\subset N_Y$ or not.
Indeed, by~\eqref{eq:NYNXPY}, we have $D\subset N_Y$ if and only if
there exist no  $(-2)$-vectors $r$ of $S_X$ such that
$\intf{\pi^*(\alpha), r}>0$ and $\intf{\pi^*(\alpha^{\tau_D}), r}<0$.
The set of such $(-2)$-vectors can be calculated by
the algorithm in Section~3.3~of~\cite{ShimadaChar5}.
\end{algorithm}
Suppose that $D\subset N_Y$.
A wall $D\cap (r)\sperp$ of $D$ is said to be \emph{inner} 
if the induced chamber $D^{s_r}$ adjacent to $D$ across $D\cap (r)\sperp$
is contained in $N_Y$.
Otherwise, we say that $D\cap (r)\sperp$  is \emph{outer}.
\begin{algorithm*}\label{algo:aux2}\hfill\\[3pt]
\textbf{Input:} An embedding $S_Y(2) \hookrightarrow S_X \hookrightarrow L_{26}$, the groups $\OG(S_Y, D_0)$, $\OG(Q)$ and two induced chambers $D, D\sprime\subset N_Y$ represented by $\tau_D, \tau_{D\sprime}$. \\
\textbf{Output:}
The set $\{\gamma\in \aut_{s}(Y) \mid D\sprime=D^{\gamma}\}$.
\begin{algorithmic}[1]
\State Compute $\Isom(D, D\sprime) := \tau_D\inv \OG(S_Y, D_0) \tau_{D\sprime}$.\newline
This is the set of all isometries $g\in \OG(S_Y, \PPP_Y)$
that satisfy $D\sprime=D^{g}$.
\State Initialize $\mathcal{I} := \{\}$
\For {$g \in \Isom(D, D\sprime)$}\newline\hspace*{12pt}
Use $\OG(Q)$ and Proposition~\ref{prop:criterion} to check
 \If {$g \in \aut_{s}(Y)$}
  \State add $g$ to $\mathcal{I}$.
 \EndIf
\EndFor
\State Return $\mathcal{I}$.
\end{algorithmic}
\end{algorithm*}
Note that since both $D$ and $D\sprime$ are contained in $N_Y$, 
condition (ii-a) of Proposition~\ref{prop:criterion} is always satisfied in line 4.
%
For $D = D\sprime$, Algorithm~\ref{algo:aux2} calculates
the group
\[
\aut_{s}(Y, D):=\OG(S_Y, D)\cap \aut_{s} (Y).
\]
Two induced chambers $D$ and $D\sprime$ in $N_Y$ are said to be
\emph{$\aut_{s}(Y)$-equivalent} if there exists an element $\gamma\in \aut_{s}(Y)$ such that
$D\sprime=D^{\gamma}$.

\begin{algorithm*}\label{algo:main}\hfill\\[3pt]
\textbf{Input:} An embedding $S_Y(2) \hookrightarrow S_X \hookrightarrow L_{26}$ \newline
and an induced chamber $D_0\subset N_Y$. \\
\textbf{Output:} A list $\RRR$ of representatives of
$\aut_{s}(Y)$-equivalence classes of  
induced chambers contained in $N_Y$ and a generating set $\GGG$ of $\aut_{s}(Y)$.
\begin{algorithmic}[1]
\State Initialize $\RRR:=[D_0]$,  $\GGG:= \{\}$ and $i:= 0$.
\While{$i \leq |\RRR|$}
    \State Let $D_i$ be the $(i+1)$st element of $\RRR$.
    \State Replace $\GGG$ by $\GGG \cup \aut_{s}(Y, D_i)$.
    \State Let $\mathcal{W}$ be the set of walls of $D_i$.
    \State Compute orbit representatives of $\mathcal{W}$ under the action of $\aut_{s}(Y, D_i)$.
    \For {each representative wall $w$ of $ \mathcal{W}/\aut_{s}(Y, D_i)$}
        \State Let $r$ be the $(-2)$-vector of $S_Y$ defining the wall $w=D\cap (r)\sperp$.
        \State Let $s_r$ be the reflection $x\mapsto x+\intf{x, r}r$.
        \State Let $D_w=D_i^{s_r}$ be the induced chamber adjacent to $D_i$ across $w$.
        \State Set $\tau_{D_w}:= \tau_{D_i} s_r$.
        \If{$D_w \not\subset N_Y$}
            \State continue with the next representative wall.
        \EndIf
        \State  Set $f:=\mathrm{true}$.
        \For {each $D\in \RRR$}
        	\If{$D$ is $\aut_{s}(Y)$-equivalent to
                $D_{w}$}
            	\State Let $\gamma\in \aut_{s}(Y)$ be an element such that $D_w=D^\gamma$.
            	\State Add $\gamma$ to $\GGG$.
            	\State Replace $f$ by $\mathrm{false}$.
            	\State Break the for loop.
        	\EndIf
        \EndFor
        \If{$f=\mathrm{true}$}
        	\State Add $D_w$ to $\RRR$.
        \EndIf
    \EndFor
    \State Increment $i$.
\EndWhile
\State Return $\RRR$ and $\GGG$.
    \end{algorithmic}
\end{algorithm*}

\begin{proof}
This Algorithm is proved in the same way as the proof of
Proposition~6.3~of~\cite{shimada-algorithm}.
\end{proof}
\subsection{Examples}
The details of the following computations are available at~\cite{shimada-comp-BRS}.
%

%
\subsubsection{The Enriques surface in Proposition~\ref{prop-f7gens}}
The Picard number of the covering $K3$ surface is $16$,
and the orthogonal complement $Q$ of $S_Y(2)$ in $S_X$ is $A_6(-2)$.
Therefore $\OG(Q)$ is of order $10080$.
The $ADE$-type of $(-2)$-vectors
in the orthogonal complement $P$ of $S_Y(2)$ in $L_{26}$ is $8A_1+2 D_4$.
Hence the embedding $\iota_Y$ is of type \text{40B} in the notation
of~\cite{BrandhorstShimada2019}.
The number $|\RRR|$ of 
$\aut_{s}(Y)$-equivalence classes of induced chambers in $N_Y$
is $2$.
Let $D_0$ and $D_1$ be the representatives 
of $\aut_{s}(Y)$-equivalence classes.
For $i=0, 1$,
the group  $\aut_{s}(Y, D_i)$ is isomorphic to $\Z/2\Z\times \Z / 2\Z$
and the $40$ walls of $D_i$  are decomposed into $10$ orbits under 
the action of $\aut_{s}(Y, D_i)$.
Among the $40$ walls,  exactly $3\times 4=12$ walls are outer walls.
For each inner wall $w$, the two induced chambers containing $w$
are not $\aut_{s}(Y)$-equivalent, that is,
one is $\aut_{s}(Y)$-equivalent to $D_0$ and the other is $\aut_{s}(Y)$-equivalent to $D_1$.
\subsubsection{The Enriques surface in Proposition~\ref{prop:F9rho16}}
The Picard number of the covering $K3$ surface is $16$,
and the orthogonal complement $Q$ of $S_Y(2)$ in $S_X$ is $E_6(-2)$.
Therefore $\OG(Q)$ is of order $103680$.
The $ADE$-type of $(-2)$-vectors
in the orthogonal complement $P$ of $S_Y(2)$ in $L_{26}$ is $D_4+D_5$.
Hence the embedding $\iota_Y$ is of type \text{20A},
which means that $D_0$ is bounded by walls defined by $(-2)$-vectors
that form the dual graph of Nikulin-Kondo's type $\mathrm{V}$~\cite{KondoFinite}.
The number $|\RRR|$ of $\aut_{s}(Y)$-equivalence classes of induced chambers in $N_Y$
is $20$.
They are decomposed into the following three types.
\[
\begin{array}{ccccc}
\text{Type} & |\aut_{s}(Y, D)| &\text{outer walls} &\text{inner walls} &
\text{number} \\
\hline
\mathrm{a} & 1 & 1\times 7  & 1\times 13 & 2 \\
\mathrm{b} & 1 & 1\times 5  & 1\times 15 & 6 \\
\mathrm{c} & 2 & 1\times 2 + 2\times 2   & 1\times 2 + 2\times  6 & 12.
\end{array}
\]
For example, 
there exist twelve $\aut_{s}(Y)$-equivalence classes of type $\mathrm{c}$.
If $D$ is an induced chamber of type $\mathrm{c}$,
then $\aut_{s}(Y, D)$ is $\Z/2\Z$, and $D$ has $6$ outer walls and $14$ inner walls.
Under the action of $\aut_{s}(Y, D)$,
the $6$ outer walls are decomposed into $4$ orbits of size  $1,1,2,2$,
and the $14$ inner walls are decomposed into $8$ orbits of size $1,1, 2, \dots, 2$.
\subsubsection{The Enriques surface in Proposition~\ref{prop:F9rho18}}
The Picard number of the covering $K3$ surface is $18$,
and the orthogonal complement $Q$ of $S_Y(2)$ in $S_X$ is $A_8(-2)$.
Therefore $\OG(Q)$ is of order $725760 $.
The $ADE$-type of $(-2)$-vectors
in the orthogonal complement $P$ of $S_Y(2)$ in $L_{26}$ is $A_3+A_4$.
Hence the embedding $\iota_Y$ is of type \text{20D},
which means that $D_0$ is bounded by walls defined by $(-2)$-vectors
that form the dual graph of Nikulin-Kondo's type $\mathrm{VII}$~\cite{KondoFinite}.
The number $|\RRR|$ of $\aut_{s}(Y)$-equivalence classes of induced chambers in $N_Y$
is $1$.
The group $\aut_{s}(Y, D_0)$ is isomorphic to $\SSSS_3$,
and the $20$ walls of $D_0$ are decomposed into $6$ orbits,
each of which  consists of
\[
6\; \text{outer},\;\;
3\; \text{outer},\;\;
3\; \text{outer},\;\;
\quad
3\; \text{inner},\;\;
3\; \text{inner},\;\;
2\; \text{inner}.
\]

\end{document}